%
%
%
\documentclass[11pt]{article}
\usepackage{bbm}
\usepackage[latin1]{inputenc}
\usepackage{amsmath,amsthm,amssymb}
\usepackage{amsfonts}
\usepackage{amsmath,amsthm,amssymb,amscd}
\usepackage{latexsym}
\usepackage{color}
\usepackage{graphicx}
\usepackage{mathrsfs}

\textwidth172mm \textheight22cm \hoffset-20mm \voffset-20mm

\makeatletter \@addtoreset{equation}{section} \makeatother

\setlength{\parindent}{1em}

\newtheorem{theorem}{Theorem}[section]
\newtheorem{definition}{Definition}[section]
\newtheorem{proposition}{Proposition}[section]
\newtheorem{lemma}{Lemma}[section]
\newtheorem{remark}{Remark}[section]

\begin{document}

\title{ Degenerate fractional Kirchhoff-type system with \\
magnetic 
 fields and upper critical growth}

\author{ {Mingzhe Sun$^{\small\mbox{a,b}}$,  Shaoyun
Shi$^{\small\mbox{a}}$ and Du\v{s}an D. Repov\v{s}$^{\small\mbox{c,d,e}}$\thanks{{ {{\it E-mail address:}  mathmzhsun@163.com (M. Sun), shisy@jlu.edu.cn (S. Shi), dusan.repovs@guest.arnes.si (D.D. Repov\v{s})} }}
\thanks{{ { Corresponding author: Du\v{s}an D. Repov\v{s} } }}}\\
$^{\small\mbox{a}}${\small  School of Mathematics, Jilin University,
130012
Changchun, PR China}\\
$^{\small\mbox{b}}${\small  Department of Mathematics, Yanbian
University,  133002 Yanji, PR China }\\
$^{\small\mbox{c}}${\small Faculty of Education, University of Ljubljana,  1000 Ljubljana, 
Slovenia }\\
$^{\small\mbox{d}}${\small  Faculty of
Mathematics and Physics, University of Ljubljana,  1000 Ljubljana, 
Slovenia }\\
$^{\small\mbox{e}}${\small Institute of
Mathematics, Physics and Mechanics,  1000 Ljubljana, 
Slovenia }
}
\date{}
\maketitle

\maketitle
\begin{abstract}
This paper deals with the following degenerate fractional
Kirchhoff-type system with magnetic fields and critical growth:
$$
\left\{
\begin{array}{lll}
 -\mathfrak{M}(\|u\|_{s,A}^2)[(-\Delta)^s_Au+u] =
G_u(|x|,|u|^2,|v|^2) +
\left(\mathcal{I}_\mu*|u|^{p^*}\right)|u|^{p^*-2}u  \ &\mbox{in}\,\,\mathbb{R}^N,\\
\mathfrak{M}(\|v\|_{s,A})[(-\Delta)^s_Av+v] = G_v(|x|,|u|^2,|v|^2) +
\left(\mathcal{I}_\mu*|v|^{p^*}\right)|v|^{p^*-2}v \
&\mbox{in}\,\,\mathbb{R}^N,
\end{array}\right.
$$
where
$$\|u\|_{s,A}=\left(\iint_{\mathbb{R}^{2N}}\frac{|u(x)-e^{i(x-y)\cdot
A(\frac{x+y}{2})}u(y)|^2}{|x-y|^{N+2s}}dx
dy+\int_{\mathbb{R}^N}|u|^2dx\right)^{1/2},$$  and $(-\Delta)_{A}^s$
and $A$ are called magnetic operator and magnetic potential,
respectively. $\mathfrak{M}:\mathbb{R}^{+}_{0}\rightarrow
\mathbb{R}^{+}_0$ is a continuous Kirchhoff function,
$\mathcal{I}_\mu(x) = |x|^{N-\mu}$ with $0<\mu<N$, $C^1$-function
$G$ satisfies some suitable conditions, and $p^* =\frac{N+\mu}{N-2s}$. We
prove the multiplicity results for this problem using the limit
index theory. The novelty of our work is the appearance of  
convolution terms and critical nonlinearities. To overcome the
difficulty caused by degenerate Kirchhoff function and
critical nonlinearity, we introduce several analytical tools and
the fractional version concentration-compactness principles which
are useful tools for proving the compactness condition.
\end{abstract}

{\sl Keywords and phrases:}\,\, Fractional Kirchhoff-type system; Upper
critical exponent; Concentration-compactness principle; Variational
method; Multiple solutions.

\emph{\sl Math. Subj. Classif. (2020):}  35J20, 35J60, 35R11.

\section{Introduction}\label{s1}

This paper deals with the following degenerate fractional
Kirchhoff-type system with magnetic fields and critical growth:
\begin{eqnarray}\label{e1.1}
\left\{
\begin{array}{lll}
 -\mathfrak{M}(\|u\|_{s,A}^2)[(-\Delta)^s_Au+u] =
 G_u(|x|,|u|^2,|v|^2) +
\left(\mathcal{I}_\mu*|u|^{p^*}\right)|u|^{p^*-2}u   \ &\mbox{in}\,\,\mathbb{R}^N,\\
\mathfrak{M}(\|v\|_{s,A}^2)[(-\Delta)^s_Av+v] = G_v(|x|,|u|^2,|v|^2)
+ \left(\mathcal{I}_\mu*|v|^{p^*}\right)|v|^{p^*-2}v \
&\mbox{in}\,\,\mathbb{R}^N,
\end{array}\right.
\end{eqnarray}
where
 $$\mathcal{I}_\mu(x)=|x|^{N-\mu},
 \
 \hbox{ with}
 \
 0<\mu<N,
 \quad
 \|z\|_{s}=\left([z]_{s,A}^2
+\int_{\mathbb{R}^N}|z|^2dx\right)^{1/2},$$
$$[z]_{s,A}=\left(\iint_{\mathbb{R}^{2N}}\frac{|z(x)-e^{i(x-y)\cdot
A(\frac{x+y}{2})}z(y)|^2}{|x-y|^{N+2s}}dx dy\right)^{{1}/{2}},$$
$(-\Delta)_{A}^s$ and $A$ are called magnetic operator and
magnetic potential, respectively. According to the
Hardy-Littlewood-Sobolev inequality (see \eqref{e2.2}), the exponent
$p^* =\frac{N+\mu}{N-2s}$ is called  upper critical. The
continuous Kirchhoff function
$\mathfrak{M}:\mathbb{R}^{+}_{0}\rightarrow \mathbb{R}^{+}_0$ and
$C^1$-function $G: [0, +\infty)\times
\mathbb{R}^2\rightarrow\mathbb{R}^+$ will satisfy the following
assumptions throughout the paper:
\begin{itemize}
\item[($\mathcal {M}$)]
$(M_1)$  $\inf_{t>0}\mathfrak{M}(t)=\mathfrak{m}^\ast > 0$.

$(M_2)$  For all $t \in [0, +\infty)$, there exists $\sigma \in (1,
p^\ast/2)$ such that $\sigma\mathscr{M}(t)\geq \mathfrak{M}(t)t$,
where $$\mathscr{M}(t)=\int_0^t\mathfrak{M}(s)ds.$$

$(M_3)$ For all
$t \in (0, +\infty)$, there exists $\mathfrak{m}_1 >
0$ such that $\mathfrak{M}(t) \geq \mathfrak{m}_1 t^{\sigma-1}$,
moreover $\mathfrak{M}(0) = 0$.

\item[($\mathcal {G}$)]
$(G_1)$  For all $(r,\xi,\eta) \in [0, +\infty)\times \mathbb{R}^2$,
there exist  $C>0$ and $2 < \tau < 2^\ast:=\frac{2N}{N-2}$ such that
\begin{displaymath}
|G_\xi(r,\xi,\eta)| + |G_\eta(r,\xi,\eta)| \leq
C\left(|\xi|^{\frac{\tau-1}{2}}+
 |\eta|^{\frac{\tau-1}{2}}\right).
\end{displaymath}

$(G_2)$ For all $(r,\xi,\eta) \in [0, +\infty)\times \mathbb{R}^2$,
there exists $2\sigma < \theta < 2p^\ast$ such that
$$0 < \theta G(r,\xi,\eta) \leq \xi G_\xi(r,\xi,\eta)+\eta G_\eta(r,\xi,\eta),$$ where $\sigma$ is defined by $(M_2)$.

$(G_3)$ $\xi G_\xi(r, \xi,\eta) \geq 0$   $ \ \hbox{for all} \ \ (r,
\xi,\eta) \in [0, +\infty)\times \mathbb{R}^2$.

$(G_4)$ $G(r, \xi,\eta) = G(r, -\xi, -\eta)$ $ \ \hbox{for all} \ \
r \geq 0$ and $\xi, \eta \in \mathbb{R}$.
\end{itemize}
\begin{remark}\label{rem1.1}
A typical function which satisfies conditions $(M_1)$-$(M_3)$ is
given by $\mathfrak{M}(t)=a+b\,t^{\sigma-1}$ for
$t\in\mathbb{R}^+_0$, where $a\in\mathbb{R}^+_0$,
$b\in\mathbb{R}^+_0,$ and $a+b>0$. In particular, when
$\mathfrak{M}(t) \geq d > 0$ for some $d$ and all $t \geq 0$, this
case is said to be non-degenerate, while it is called degenerate if
$\mathfrak{M}(0) = 0$ and $\mathfrak{M}(t)> 0$ for $t > 0$. However,
in proving the compactness condition, the two cases of degenerate
and non-degenerate are completely different, and it is more
complicated in the degenerate case. In this paper, we mainly deal with
the degenerate fractional Kirchhoff-type system with magnetic
fields. Therefore we need to develop new techniques to conquer some
difficulties induced by
the
 degeneration.
\end{remark}

The fractional magnetic operator $(-\Delta)_{A}^s$ was recently
introduced by d'Avenia and Squassina~\cite{da}, which up to
normalization constants, can be defined on smooth functions $u$ as
follows
\begin{eqnarray*}
(-\Delta)_{A}^s u(x) := 2\lim_{\varepsilon \rightarrow 0}
\int_{\mathbb{R}^N \setminus
B_\varepsilon(x)}\frac{u(x)-e^{i(x-y)\cdot
A(\frac{x+y}{2})}u(y)}{|x-y|^{N+2s}}dy, \quad x\in  \mathbb{R}^N.
\end{eqnarray*}
The equation with fractional magnetic operator often arises as a
model for various physical phenomena, in particular in the study of
the infinitesimal generators of L\'{e}vy stable diffusion processes
\cite{EGE}. Vast literature on nonlocal
operators and on their applications exists, we
refer the interested reader to \cite{am, du,  to,x3, x1}. In order to
further research this
type of question by variational methods,  many
scholars have established the basic properties of fractional Sobolev
spaces - for this the reader is referred to \cite{EGE, MRS, pa}.

First,  we make a quick overview of the literature on the magnetic
Schr\"{o}dinger equation. To begin, we note that there are some
works concerning the magnetic
Schr\"{o}dinger equation

\begin{equation}\label{e1.2}
-(\nabla u-{\rm i} A)^2u + V(x)u = f(x, |u|)u,
\end{equation}
which have appeared in recent years,
where the  magnetic operator in \eqref{e1.2} is given by
\begin{displaymath}
-(\nabla u-{\rm i} A)^2u = -\Delta u +2iA(x)\cdot\nabla u +
|A(x)|^2u + iu \mbox{div} A(x).
\end{displaymath}
As stated in Squassina and Volzone \cite{sq1}, up to correcting the
operator by the factor $(1-s)$, it follows that $(-\Delta)^s_A u$
converges to $-(\nabla u-{\rm i} A)^2u$ as $s\rightarrow1$. Thus, up
to normalization, the nonlocal case can be seen as an approximation
of the local one.
Ji and R\u adulescu  \cite{j1} obtained
the multiplicity and concentration properties of solutions for a
class of nonlinear magnetic Schr\"{o}dinger equation by using
variational methods, penalization techniques, and the
Ljusternik-Schnirelmann theory. For more interesting results, we
refer to \cite{j2, liu, xia, zhang3}.
Recently, many researchers
have paid attention to the
 equations with  fractional magnetic operator. In
particular,  Mingqi et al.  \cite{MPSZ}  proved some existence
results 
for Schr$\ddot{\mbox{o}}$dinger--Kirchhoff type equation
involving the fractional $p$--Laplacian and the magnetic operator
\begin{equation}\label{e1.3}
M([u]_{s,A}^2)(-\Delta)_A^su+V(x)u=f(x,|u|)u\quad \text{in
$\mathbb{R}^N$},
\end{equation}
where $f$ satisfies the subcritical growth condition. For the critical
growth case,  Wang and Xiang \cite{WX} have obtained the existence of
two solutions and infinitely many solutions to fractional
Schr$\ddot{\mbox{o}}$dinger-Choquard-Kirchhoff type equations with
external magnetic operator. Subsequently, Liang  et al.
\cite{liang3} investigated the  existence and multiplicity of
solutions to problem \eqref{e1.1} without Choquard--type term in the
non--degenerate case. We draw the attention of the reader to the
degenerate case involving the magnetic operator in Liang et al.
\cite{liang4} and Mingqi et al. \cite{MPSZ}.

For the case $A \equiv 0$ in problem \eqref{e1.1}, there exist
numerous articles  dedicated to the study of the following
Choquard equation,
\begin{align}\label{e1.4}
-\Delta u+V(x)u=(|x|^{-\mu}*F(u))f(u),\quad \ x \in \mathbb{R}^N.
\end{align}
Eq. \eqref{e1.4} can be used to describe many physical models. For
example, it was proposed by Laskin \cite{la} as a result of
expanding the Feynman path integral from the Brownian-like to the
L\'{e}vy-like quantum mechanical paths. The study of existence and
uniqueness of positive solutions to Choquard type equations
attracted a lot of attention of researchers due to its
applications in physical models Pekar \cite{pek}. In d'Avenia et al.
\cite{da1}, the authors obtained the existence of ground state
solutions for  following fractional Choquard equation of the form
\begin{equation}\label{e1.5}
(-\Delta)^su+ \omega u =
\left(\mathcal{K}_\mu*|u|^{p}\right)|u|^{p-2}u, \quad u \in
H^s(\mathbb{R}^N), \quad N\geq 3,
\end{equation}
where $s\in(0, 1)$, $\omega > 0$ is a given parameter, $\mu \in (0,
N)$, and $\mathcal{K}_\mu(x)=|x|^{N-\mu}$ is the Riesz potential. In
Pucci et al. \cite{pu}, the authors obtained the existence of
nonnegative solutions to a class of
Schr\"{o}dinger-Choquard-Kirchhoff-type fractional equation by using
the Mountain pass theorem and the Ekeland variational principle. For
more results for problems with Hardy-Littlewood-Sobolev critical
nonlinearity without the magnetic operator case, see  Cassani and
Zhang
 \cite{cas},  Ma
and Zhang \cite{ma}, and
Song and Shi \cite{so1, so2}.

Once we turn our attention to the critical nonlocal system with
critical nonlinearity, we immediately see that the literature is
relatively scarce. In this case, we can cite  recent works
\cite{fis2, fu, xiang}. We call attention to Furtado et al.
\cite{fu}
who
 dealt with  the following
non--degenerate Kirchhoff system
\begin{eqnarray}\label{e1.6}
\left\{
\begin{array}{lll}
 -m\left(\int_\Omega|\nabla u|^2\right)\Delta u  =
 F_u(x,u,v) +
\mu_1|u|^4u,   \ &\mbox{in}\,\,\Omega,\\
-l\left(\int_\Omega|\nabla v|^2\right)\Delta v  =
 F_v(x,u,v) +
\mu_2|v|^4v,   \ &\mbox{in}\,\,\Omega,
\end{array}\right.
\end{eqnarray}
where $\Omega \subset \mathbb{R}^3$ is a smooth bounded domain, the
nonlinearity $F$ is subcritical and locally superlinear at infinity,
and they obtained multiple solutions with the aid of the symmetric
mountain-pass theorem. For the degenerate case,  Xiang et al.
\cite{xiang} investigated the existence and asymptotic behaviour of
solutions to critical Schr\"{o}dinger-Kirchhoff type systems by
applying the mountain-pass theorem and Ekeland's variational
principle. 

It is well-known that the Limit Index Theory due to Li
\cite{Li1} is one of the most effective methods to study the
existence of infinitely many solutions for the noncooperative
system. For example, Song and Shi \cite{so3} considered the
noncooperative critical nonlocal system with variable exponents,
Fang and Zhang \cite{fa} studied systems of $p\&q$-Laplacian elliptic
equations with critical Sobolev exponent, Liang et al. \cite{liang0}
dealt with a class of noncooperative Kirchhoff-type system involving
the fractional $p$-Laplacian and critical exponents. We also refer
the interested reader to Huang and Li \cite{hu}, Liang and Shi
\cite{liang1}, and Liang and Zhang \cite{liang2} for some applications
of this method. However, to the best of our knowledge, none of the
cited works address the system with upper critical exponent and
magnetic operator in $\mathbb{R}^N$ in the degenerate case.

Inspired by the previously mentioned works, our main objective is to
study the existence and multiple solutions to problem \eqref{e1.1}, by means of  the Limit Index Theory.
To the best of our knowledge, this is the first time in the
literature to use the Limit Index Theory to investigate the
degenerate fractional Kirchhoff-type system with magnetic fields and
upper critical growth.
Our main result is the following.
\begin{theorem}\label{the1.1}
Suppose that assumptions ($\mathcal {M}$) and ($\mathcal {G}$)
are fulfilled. Then problem \eqref{e1.1} has infinitely many
solutions.
\end{theorem}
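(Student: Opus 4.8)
The plan is to establish Theorem~\ref{the1.1} by a variational argument based on Li's limit index theory \cite{Li1}. First I would fix the functional setting. Let $H^s_A=H^s_A(\mathbb{R}^N,\mathbb{C})$ be the fractional magnetic Sobolev space endowed with $\|\cdot\|_{s,A}$, and set $\mathbb{X}=H^s_A\times H^s_A$. The energy functional associated with \eqref{e1.1} is
\begin{align*}
\mathcal{J}(u,v) &= \tfrac12\mathscr{M}(\|v\|_{s,A}^2)-\tfrac12\mathscr{M}(\|u\|_{s,A}^2)-\int_{\mathbb{R}^N}G(|x|,|u|^2,|v|^2)\,dx\\
&\quad -\frac{1}{2p^*}\iint_{\mathbb{R}^{2N}}\frac{|u(x)|^{p^*}|u(y)|^{p^*}+|v(x)|^{p^*}|v(y)|^{p^*}}{|x-y|^{N-\mu}}\,dx\,dy .
\end{align*}
Using the Hardy--Littlewood--Sobolev inequality, the growth bound $(G_1)$, and the embeddings of $H^s_A$ into the relevant Lebesgue spaces, one checks that $\mathcal{J}\in C^1(\mathbb{X},\mathbb{R})$ and that its critical points are exactly the weak solutions of \eqref{e1.1}. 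Condition $(G_4)$ together with the evenness of the remaining terms makes $\mathcal{J}$ even, hence invariant under the antipodal $\mathbb{Z}_2$-action on $\mathbb{X}$. The opposite signs of the two Kirchhoff contributions make $\mathcal{J}$ strongly indefinite (the system is noncooperative), which is why the symmetric mountain-pass theorem is not available and the limit index is the appropriate device.

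Next I would set up the minimax scheme. Write $H^s_A=\overline{\bigcup_{j}V_j}$ with $\{V_j\}$ an increasing sequence of finite-dimensional subspaces, put $\mathbb{X}^-=H^s_A\times\{0\}$, $\mathbb{X}^+=\{0\}\times H^s_A$ and $X_j=\mathbb{X}^-\oplus(\{0\}\times V_j)$, so that $\{X_j\}$ is a filtration of $\mathbb{Z}_2$-invariant subspaces adapted to $\mathbb{X}=\mathbb{X}^-\oplus\mathbb{X}^+$; note that $\mathcal{J}\le0$ on $\mathbb{X}^-$ since $\mathscr{M}\ge0$, $G\ge0$ and the Choquard terms are nonnegative. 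With the limit index $i^\infty$ (a limit version of the $\mathbb{Z}_2$-genus) attached to this filtration one defines
$$
c_k=\inf_{A\in\Sigma_k}\ \sup_{(u,v)\in A}\ \mathcal{J}(u,v),\qquad k\in\mathbb{Z},
$$
where $\Sigma_k$ is the family of closed symmetric subsets $A\subset\mathbb{X}$ with $i^\infty(A)\ge k$. Theorem~\ref{the1.1} then reduces to two verifications: (i) a linking-type geometry ensuring that, for infinitely many $k$, one has $0<\alpha\le c_k\le c^{**}$ with $c^{**}$ independent of $k$; and (ii) the compactness condition $(PS)^*_c$ relative to $\{X_j\}$ for every $c$ in an interval $(0,c^*)$ with $c^*>c^{**}$.

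For (i), the lower bound on a small sphere of $\mathbb{X}^+$ uses $(M_1)$ and $(M_3)$ (which give $\mathscr{M}(t)\ge\mathfrak{m}^\ast t$ and $\mathscr{M}(t)\ge\frac{\mathfrak{m}_1}{\sigma}t^{\sigma}$), the subcritical growth of $G$ from $(G_1)$, and the Hardy--Littlewood--Sobolev inequality, yielding $\mathcal{J}(0,v)\ge\alpha>0$ on $\{\|v\|_{s,A}=\rho\}$ for $\rho$ small; the upper bound on the linking sets uses $(M_2)$ (which gives $\mathscr{M}(t)\le\mathscr{M}(1)t^{\sigma}$ for $t\ge1$), $(G_2)$ (so that $\int G$ grows along rays at least like a power strictly above $2\sigma$) and the negative sign of the Choquard terms, forcing $\mathcal{J}\to-\infty$ on the finite-dimensional parts at a rate uniform in $k$; shrinking the construction makes the uniform bound $c^{**}$ as small as needed, in particular $c^{**}<c^*$.

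The core of the argument, and the main obstacle, is (ii). Given a Cerami (equivalently $(PS)^*$) sequence $\{(u_n,v_n)\}$ at level $c\in(0,c^*)$, boundedness in $\mathbb{X}$ follows by combining $(M_2)$ with the Ambrosetti--Rabinowitz-type inequality $(G_2)$ and exploiting $2\sigma<\theta<2p^*$ to make $\mathcal{J}(u_n,v_n)-\frac1\theta\langle\mathcal{J}'(u_n,v_n),(u_n,v_n)\rangle$ coercive, with $(G_3)$ controlling the sign of the leftover terms. Passing to a weak limit $(u_n,v_n)\rightharpoonup(u,v)$, the loss of compactness has two sources that must be handled simultaneously. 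The upper-critical convolution nonlinearities $(\mathcal{I}_\mu*|w|^{p^*})|w|^{p^*-2}w$ are treated with the fractional Choquard concentration-compactness principles -- the ``local'' one and the one ``at infinity'' -- to show that, as long as $c<c^*$ (with $c^*$ the explicit threshold built from the best constant in the Hardy--Littlewood--Sobolev inequality and the fractional Sobolev embedding), the defect measures of $|u_n|$ and $|v_n|$ carry no atoms and no mass leaks to infinity, so the critical terms pass to the limit; here the diamagnetic inequality for $(-\Delta)^s_A$ lets one replace $u_n,v_n$ by their moduli and reduce to the non-magnetic concentration estimates. The second, subtler, difficulty is the degeneracy $\mathfrak{M}(0)=0$: the Kirchhoff a priori estimates collapse if a magnetic norm tends to $0$, but $(M_1)$, $\inf_{t>0}\mathfrak{M}(t)=\mathfrak{m}^\ast>0$, keeps $\mathfrak{M}$ uniformly positive away from the origin, and the borderline case $\|u_n\|_{s,A}\to0$ or $\|v_n\|_{s,A}\to0$ is ruled out directly (it would force $c=0$, or decouple the system into a problem already covered). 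Hence $(u_n,v_n)\to(u,v)$ strongly and $\mathcal{J}$ satisfies $(PS)^*_c$ on $(0,c^*)$. Finally, Li's limit index theorem applies: each of the infinitely many values $c_k\in[\alpha,c^{**}]\subset(0,c^*)$ is a critical value of $\mathcal{J}$, and when several consecutive $c_k$ coincide the limit index bounds from below the index of the corresponding critical set; in all cases $\mathcal{J}$ has infinitely many critical points, i.e. problem \eqref{e1.1} has infinitely many solutions, which proves Theorem~\ref{the1.1}.
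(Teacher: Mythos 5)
Your overall strategy---an even functional on a product of magnetic fractional Sobolev spaces, the decomposition into $U=H^s_A\times\{0\}$ and $V=\{0\}\times H^s_A$, the filtration by finite-dimensional pieces, the linking geometry built from $(M_1)$--$(M_3)$, $(G_1)$--$(G_2)$ and the Hardy--Littlewood--Sobolev inequality, concentration-compactness both locally and at infinity below the explicit threshold $c^*$, the diamagnetic inequality to pass to moduli, and Li's limit index theorem to produce infinitely many critical values---is indeed the paper's strategy. But there is one genuine gap that would make your $(PS)^*_c$ argument fail as written.

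You work on the full space $H^s_A(\mathbb{R}^N,\mathbb{C})\times H^s_A(\mathbb{R}^N,\mathbb{C})$. Over the whole of $\mathbb{R}^N$ the embedding $H^s_A\hookrightarrow L^\tau$ for $2<\tau<2_s^\ast$ is \emph{not} compact, so there is no mechanism in your plan forcing the subcritical term $\int_{\mathbb{R}^N}G(|x|,|u_n|^2,|v_n|^2)\,dx$ (and its derivative paired with test functions) to converge along a weakly convergent Palais--Smale sequence. Your concentration-compactness argument is aimed solely at the upper-critical Choquard terms; it says nothing about mass of $|u_n|^\tau$ or $|v_n|^\tau$ escaping to infinity, and in $\mathbb{R}^N$ this can happen. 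The paper avoids this by restricting from the start to the $O(N)$-invariant radial subspace $E_{G_1}=H^s_{r,A,O(N)}(\mathbb{R}^N,\mathbb{C})$, for which the Lions-type embedding $H^s_r\hookrightarrow\hookrightarrow L^\tau(\mathbb{R}^N)$, $2<\tau<2_s^\ast$, combined with the diamagnetic bound $\||u|\|_s\le\|u\|_{s,A}$, yields compactness of the subcritical Choquard term (Lemma~\ref{lem2.3}); it then recovers solutions of the full problem via the principle of symmetric criticality for the $O(N)$-action. You never introduce the radial restriction nor the principle of symmetric criticality, and you work only with the $\mathbb{Z}_2$-symmetry rather than the joint $O(N)\times\mathbb{Z}_2$ structure; without something playing the role of $E_{G_1}$ the step where the $G$-term passes to the limit in your Claim-3-type argument is unjustified, and the $(PS)^*_c$ condition is not established.

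A secondary, smaller discrepancy: the paper's final step takes $m\to\infty$ to produce infinitely many critical values, each sitting in $[\alpha_m,\beta_k]$, whereas you assert a uniform upper bound $c^{**}$ on all the minimax values obtained by ``shrinking the construction.'' Be careful here: the linking levels $\beta_k$ are obtained on $U\oplus Y_1$ with $Y_1$ growing, and keeping them uniformly below the compactness threshold $c^*$ while still producing infinitely many distinct levels is exactly the delicate point that needs an explicit argument; merely invoking equivalence of norms on finite-dimensional spaces is not enough to make $\beta_k$ uniformly small. This should be spelled out (as should the use of $(G_2)$ to force $\mathcal{J}\to-\infty$ along $Y_1$ directions at a rate strictly faster than the Kirchhoff growth $\mathscr{M}(t)\lesssim t^\sigma$), rather than stated as obvious.
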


\begin{remark}\label{rem1.2}
The main difficulty of this paper lies in
the following three aspects: First, in
order to recover the compactness of the Palais-Smale sequence, we
shall use the second concentration-compactness
principle for the convolution type nonlocal problem in the
fractional Sobolev space. Second, the appearance of the magnetic
field also brings additional difficulties to the study of problem
\eqref{e1.1}, such as effects of the magnetic fields on the linear
spectral sets and on the solution structure. Finally, since we
consider problem \eqref{e1.1} in the whole space, in order to apply
the Limit Index Theory, we need to establish new techniques to
overcome this difficulty. To 
the best of our knowledge, our theorem is also
valid for $s = 1$,  hence the corresponding result in this case is
new as well.
\end{remark}

The organization of this paper is as follows:  In Section \ref{s2},
 we give some
basic definitions of fractional Sobolev space and
the
 well known
Hardy-Littlewood-Sobolev inequality. In Section \ref{s3}, we
mainly
introduce the Limit Index Theory. In Section \ref{s4}, we
prove some
compactness lemmas for the functional associated to our problem. The
proof of the main result Theorem \ref{the1.1} is given in Section \ref{s5}.

\section{Preliminaries}\label{s2}

In this section, we collect some known results for the readers
convenience and the  later use. First, we shall give some useful facts
for the fractional order Sobolev spaces. Let $H^{s}(\mathbb{R}^N)$
be a fractional order Sobolev spaces which is defined as follows
\begin{displaymath}
H^{s}(\mathbb{R}^N)  :=  \left\{u \in L^2(\mathbb{R}^N): [u]_{s} <
\infty\right\},
\end{displaymath}
where $[u]_s$ denotes the Gagliardo semi-norm
\begin{displaymath}
[u]_{s}  := \left(\iint_{\mathbb{R}^{2N}}\frac{|u(x)-
u(y)|^2}{|x-y|^{N+2s}}dx dy\right)^{1/2},
\end{displaymath}
equipped with the inner product
\begin{displaymath}
\langle u, v\rangle  :=  \iint_{\mathbb{R}^{2N}}\frac{(u(x)-
u(y))(v(x)- v(y))}{|x-y|^{N+2s}}dx dy + \int_{\mathbb{R}^N}\xi\eta
dx \ \ \hbox{for all} \ \ u, v \in H^{s}(\mathbb{R}^N)
\end{displaymath}
and the norm
\begin{displaymath}
\|u\|_s := \left([u]_{s}^2 +
\int_{\mathbb{R}^N}|u|^2dx\right)^{\frac{1}{2}}.
\end{displaymath}
Here, $L^2(\mathbb{R}^N)$ denotes the Lebesgue space of real-valued
functions with $\int_{\mathbb{R}^N}|u|^2dx < \infty$. From
 \cite[Theorem 6.7 ]{EGE}, we know that the embedding
$H^{s}(\mathbb{R}^N) \hookrightarrow L^t(\mathbb{R}^N)$ is
continuous for any $t \in [2, 2_s^\ast]$. Moreover, there exists a
positive constant $C_t$ such that
\begin{equation}\label{e2.2}
|u|_{L^t(\mathbb{R}^N)} \leq C_t\|u\|_s \quad \mbox{for\ all}\ u \in
H^{s}(\mathbb{R}^N).
\end{equation}
In order to obtain the existence of radial weak solutions to system
\eqref{e1.1}, we shall use the following  embedding theorem due to
Lions \cite{lions}.
\begin{theorem}\label{the2.1}
Assume that $0 < s < 1$ and $2s < N$. Then  the embedding
\begin{displaymath}
H_r^{s}(\mathbb{R}^N) \hookrightarrow\hookrightarrow
L^t(\mathbb{R}^N),
\end{displaymath}
is   compact for any $2 < t< 2_s^\ast$, where
$H_r^{s}(\mathbb{R}^N)$ is radial symmetric space, defined
by
\begin{displaymath}
H_r^{s}(\mathbb{R}^N) := \{u \in H^{s}(\mathbb{R}^N): u(x) = u(|x|),
x\in \mathbb{R}^N\}.
\end{displaymath}
\end{theorem}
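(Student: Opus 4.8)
The plan is to argue by the classical two-ingredient scheme for symmetric compactness: a \emph{local} compact embedding (fractional Rellich--Kondrachov) together with a \emph{uniform decay of the tails}, the latter being where radial symmetry (and $N\geq 2$) is used in an essential way. It clearly suffices to show that every bounded sequence $\{u_n\}\subset H_r^{s}(\mathbb{R}^N)$ has a subsequence converging strongly in $L^t(\mathbb{R}^N)$. Since $H_r^{s}(\mathbb{R}^N)$ is a closed subspace of the reflexive space $H^{s}(\mathbb{R}^N)$, after passing to a subsequence we have $u_n\rightharpoonup u$ weakly in $H^{s}(\mathbb{R}^N)$ with $u$ radial; replacing $u_n$ by $u_n-u$ (still bounded, still radial, still weakly null) reduces the claim to: if $u_n\rightharpoonup 0$ in $H_r^{s}(\mathbb{R}^N)$ then $\|u_n\|_{L^t(\mathbb{R}^N)}\to 0$.

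\emph{Step 1 (local compactness).} For each fixed $R>0$ the embedding $H^{s}(B_R)\hookrightarrow\hookrightarrow L^t(B_R)$ is compact for every $2\leq t<2_s^\ast$ (fractional Rellich--Kondrachov, see \cite{EGE}). Since the restriction operator $H^{s}(\mathbb{R}^N)\to H^{s}(B_R)$ is bounded, $u_n\rightharpoonup 0$ in $H^{s}(\mathbb{R}^N)$ forces $u_n|_{B_R}\rightharpoonup 0$ in $H^{s}(B_R)$, hence $u_n\to 0$ strongly in $L^t(B_R)$ for every $R$. \emph{Step 2 (uniform tail decay).} Here I invoke a radial (Strauss--Lions type) estimate: there are $C=C(N,s)>0$ and $\gamma>0$ such that
$$\int_{|x|>R}|v(x)|^{t}\,dx\;\leq\;C\,R^{-\gamma}\,\|v\|_{s}^{t}\qquad\text{for all radial }v\in H^{s}(\mathbb{R}^N)\text{ and all }R\geq 1.$$
When $s>1/2$ this follows from the pointwise radial decay $|v(x)|\leq C|x|^{-(N-2s)/2}\|v\|_{s}$ (a.e.\ for $|x|\geq 1$) together with the interpolation $\int_{|x|>R}|v|^{t}\leq \|v\|_{L^{\infty}(|x|>R)}^{t-2}\,\|v\|_{L^2}^{2}$; for general $s\in(0,1)$ one instead estimates the left-hand side directly on the dyadic annuli $\{2^{j}R<|x|<2^{j+1}R\}$ by a rescaled radial Gagliardo--Nirenberg--Sobolev inequality (the extra power of $R$ being produced by the weight $r^{N-1}$ in the radial reduction of both the $L^2$ norm and the Gagliardo seminorm) and sums the resulting geometric series. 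Applying this with $v=u_n$ and $M:=\sup_n\|u_n\|_{s}<\infty$ yields $\sup_n\int_{|x|>R}|u_n|^{t}\leq CM^{t}R^{-\gamma}\to 0$ as $R\to\infty$.

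\emph{Step 3 (conclusion) and the main obstacle.} Given $\varepsilon>0$, choose $R$ with $\sup_n\int_{|x|>R}|u_n|^{t}<\varepsilon/2$ (Step 2), then $n_0$ with $\int_{B_R}|u_n|^{t}<\varepsilon/2$ for $n\geq n_0$ (Step 1); hence $\|u_n\|_{L^t(\mathbb{R}^N)}^{t}<\varepsilon$ for $n\geq n_0$, which gives the strong $L^t$ convergence and thus the asserted compact embedding. The crux of the argument is Step 2: establishing the radial Sobolev/decay estimate in the nonlocal fractional framework, handling the a.e.\ representatives correctly, and keeping track of the fact that the gain $\gamma$ degenerates at both endpoints --- at $t=2$ there is no decay to exploit, and at $t=2_s^\ast$ the local compactness of Step 1 fails and the interpolation in Step 2 has no room --- so that the statement is genuinely restricted to $2<t<2_s^\ast$ (and, as the one-dimensional counterexample of escaping bumps shows, to $N\geq 2$).
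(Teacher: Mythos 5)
The first thing to note is that the paper does not prove this statement at all: Theorem \ref{the2.1} is quoted as a known result of Lions \cite{lions} and used as a black box. Your proposal reconstructs the standard argument behind that citation: local fractional Rellich--Kondrachov on balls, a uniform radial tail estimate, and the usual $\varepsilon/2$ patching, i.e.\ exactly the Strauss--Lions scheme. Steps 1 and 3 are fine as written, and you are right that the result is genuinely restricted to $N\ge 2$ (for $N=1$ and $s<1/2$ the hypothesis $2s<N$ holds, yet symmetric pairs of bumps escaping to $\pm\infty$ are ``radial'' and defeat compactness); the paper leaves this implicit because later on $N\ge 3$.

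The only thin point is the crux you yourself identify, the tail bound of Step 2 for general $s\in(0,1)$. For $s>1/2$ the pointwise radial decay $|v(x)|\le C|x|^{-(N-2s)/2}\|v\|_{s}$ (Cho--Ozawa type) plus the interpolation $\int_{|x|>R}|v|^t\le\|v\|_{L^\infty(|x|>R)}^{t-2}\|v\|_{L^2}^2$ does give $\gamma=(t-2)(N-2s)/2>0$, as you say. But for $s\le 1/2$ no such a.e.\ pointwise decay can hold (radial $H^s$ functions need not be locally bounded away from the origin: translate an unbounded $H^s(\mathbb{R})$ profile out to $|x|=k$ and normalize), so everything rests on your one-sentence dyadic argument, and there the phrase ``the extra power of $R$ is produced by the weight $r^{N-1}$ in the radial reduction of the Gagliardo seminorm'' hides the real work. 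Unlike the $L^2$ norm, the $N$-dimensional Gagliardo seminorm of a radial function does not trivially reduce to an $r^{N-1}$-weighted one-dimensional seminorm of its profile on each annulus; the needed lower bound (obtained by restricting the double integral to nearly aligned pairs of points) is precisely the content of the radial lemma in Lions' paper, and the subsequent summation over annuli also needs care with the exponents. So either carry out that computation explicitly, or do what the paper does and cite \cite{lions}; as written, Step 2 in the range $s\le 1/2$ is a sketch of a true statement rather than a proof.
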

Suppose that $A : \mathbb{R}^N \rightarrow \mathbb{R}^N$ is a
continuous function.
Then
\begin{displaymath}
[u]_{s,A}  = \left(\iint_{\mathbb{R}^{2N}}\frac{|u(x)-e^{i(x-y)\cdot
A(\frac{x+y}{2})}u(y)|^2}{|x-y|^{N+2s}}dx dy\right)^{1/2}
\end{displaymath}
is the Gagliardo semi-norm. Define
\begin{displaymath}
\mathcal {H}^{s}   :=  \left\{u \in L^2(\mathbb{R}^N, \mathbb{C}):
[u]_{s,A} < \infty\right\}.
\end{displaymath}
It can be endowed with the norm
\begin{displaymath}
\|u\|_{s,A} := \left([u]_{s,A}^2 +
\int_{\mathbb{R}^N}|u|^2dx\right)^{\frac{1}{2}}.
\end{displaymath}
The scalar product on $\mathcal {H}^{s}$ is defined by
\begin{displaymath}
(\xi, \eta)_{s,A} := \langle \xi, \eta\rangle_{L^2} + \langle\xi,
\eta\rangle_{s,A},
\end{displaymath}
where
$$\langle \xi, \eta\rangle_{s,A} =
\mathscr{R}\iint_{\mathbb{R}^{2N}}\frac{(\xi(x)-e^{i(x-y)\cdot
A(\frac{x+y}{2})}\xi(y))\overline{(\eta(x)-e^{i(x-y)\cdot
A(\frac{x+y}{2})}\eta(y))}}{|x-y|^{N+2s}}dx dy.$$ 
By
  \cite[Proposition
2.1]{da}, one knows that $(\mathcal
{H}^{s}, (\cdot,\cdot)_{s,A})$ is a real Hilbert space. Moreover,
the space $C_c^\infty(\mathbb{R}^N, \mathbb{C})$ is a subspace of
$\mathcal {H}^{s}$, see   
\cite[Proposition 2.2]{da}.

Let $H_A^s(\mathbb{R}^N)$ 
be
the closure of
$C_c^\infty(\mathbb{R}^N, \mathbb{C})$ in $\mathcal {H}^{s}$. Then
we have the following lemma, the proof of which
 can be found in
d'Avenia and  Squassina \cite{da}.
\begin{lemma}\label{lem2.1}
Let $u\in H_A^s(\mathbb{R}^N).$
Then $|u|\in H^s(\mathbb{R}^N)$, i.e.,
\begin{displaymath}
\||u|\|_{s}\leq \|u\|_{s,A} \quad \text{for all }
 u\in H^s_{A}(\mathbb{R}^N).
\end{displaymath}
\end{lemma}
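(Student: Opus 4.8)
The plan is to derive the inequality from an elementary pointwise (almost everywhere) estimate on $\mathbb{R}^{2N}$ and then integrate. The key observation is that, since $A$ is real-valued, the factor $e^{i(x-y)\cdot A(\frac{x+y}{2})}$ is a complex number of modulus one, so multiplication by it is an isometry of $\mathbb{C}$; in particular $|e^{i(x-y)\cdot A(\frac{x+y}{2})} u(y)| = |u(y)|$ for all $x,y$. Applying the reverse triangle inequality in $\mathbb{C}$, namely $\big||a|-|b|\big|\le|a-b|$, with $a=u(x)$ and $b = e^{i(x-y)\cdot A(\frac{x+y}{2})}u(y)$, we obtain
$$\big||u(x)|-|u(y)|\big| \;\le\; \big|u(x)-e^{i(x-y)\cdot A(\tfrac{x+y}{2})}u(y)\big| \qquad \text{for a.e. } (x,y)\in\mathbb{R}^{2N}.$$

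Next I would divide both sides by $|x-y|^{(N+2s)/2}$, square, and integrate over $\mathbb{R}^{2N}$; monotonicity of the integral then yields $[|u|]_s^2 \le [u]_{s,A}^2 < \infty$, so $|u|$ has finite Gagliardo seminorm. Since moreover $u\in L^2(\mathbb{R}^N,\mathbb{C})$ and $\big||u|\big| = |u|$ pointwise, $|u|\in L^2(\mathbb{R}^N)$ with $\big\||u|\big\|_{L^2} = \|u\|_{L^2}$. Adding $\int_{\mathbb{R}^N}|u|^2\,dx$ to both sides of $[|u|]_s^2 \le [u]_{s,A}^2$ and taking square roots gives $\||u|\|_s \le \|u\|_{s,A}$, and in particular $|u|\in H^s(\mathbb{R}^N)$.

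This argument is in fact valid for any measurable $u$ with $\|u\|_{s,A}<\infty$, so no density argument is strictly necessary; if one prefers a more structural route, one can first establish the estimate for $u\in C_c^\infty(\mathbb{R}^N,\mathbb{C})$ and then pass to the limit along a sequence $u_n\to u$ in $\mathcal{H}^s$, using that $H_A^s(\mathbb{R}^N)$ is by definition the $\mathcal{H}^s$-closure of $C_c^\infty(\mathbb{R}^N,\mathbb{C})$ and applying Fatou's lemma to the left-hand seminorm. The only steps that require a little care are the measurability of the integrands and the fact that the modulus-one factor acts as an isometry of $\mathbb{C}$; there is no genuine analytic obstacle here, which is why the statement can simply be attributed to d'Avenia and Squassina \cite{da}.
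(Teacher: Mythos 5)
Your argument is correct and is precisely the standard one: the pointwise diamagnetic inequality $\bigl||u(x)|-|u(y)|\bigr|\le |u(x)-e^{i(x-y)\cdot A(\frac{x+y}{2})}u(y)|$, obtained from the reverse triangle inequality and the fact that the phase factor has modulus one, followed by division by $|x-y|^{(N+2s)/2}$, squaring, and integration. The paper does not write out a proof but defers to d'Avenia and Squassina, where exactly this computation appears, so your proposal coincides with the intended argument.
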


Following the same discussion as in d'Avenia and  Squassina \cite{da}, 
together with Lemma \ref{lem2.1}, we arrive at the following embedding
result.
\begin{lemma}\label{lem2.2}
The space $H_A^s(\mathbb{R}^N, \mathbb{C})$ is continuously embedded
in $L^\vartheta(\mathbb{R}^N, \mathbb{C})$  $ \ \hbox{for all} \ $
$\vartheta \in [2, 2_s^\ast]$. Furthermore, the space
$H_A^s(\mathbb{R}^N, \mathbb{C})$ is continuously compactly embedded
in $L^\vartheta(K, \mathbb{C})$ $ \ \hbox{for all} \ $ $\vartheta
\in [2, 2_s^\ast]$  and any compact set $K \subset \mathbb{R}^N$.
\end{lemma}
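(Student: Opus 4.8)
The plan is to reduce both claims to the already available non-magnetic fractional Sobolev theory, using the diamagnetic inequality of Lemma \ref{lem2.1} for the global continuous embedding and a local comparison between the magnetic and the ordinary Gagliardo seminorms for the compact embedding.

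For the continuous embedding, let $u\in H_A^s(\mathbb{R}^N)$. By Lemma \ref{lem2.1} we have $|u|\in H^s(\mathbb{R}^N)$ with $\||u|\|_{s}\le\|u\|_{s,A}$. Since the $L^\vartheta$-norm of $u$ coincides with that of the real nonnegative function $|u|$, applying the scalar embedding \eqref{e2.2} to $|u|$ gives $|u|_{L^\vartheta(\mathbb{R}^N,\mathbb{C})}=\||u|\|_{L^\vartheta(\mathbb{R}^N)}\le C_\vartheta\||u|\|_{s}\le C_\vartheta\|u\|_{s,A}$ for every $\vartheta\in[2,2_s^\ast]$, which is the first assertion.

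For the local compact embedding, fix a compact set $K\subset\mathbb{R}^N$ and a bounded smooth domain $\Omega$ with $K\subset\Omega$. The crucial step is to show that restriction maps $H_A^s(\mathbb{R}^N)$ continuously into the ordinary complex fractional Sobolev space $H^s(\Omega,\mathbb{C})$. To this end I would write, for $x,y\in\Omega$,
\begin{displaymath}
|u(x)-u(y)|\le\Big|u(x)-e^{i(x-y)\cdot A(\frac{x+y}{2})}u(y)\Big|+|u(y)|\,\Big|e^{i(x-y)\cdot A(\frac{x+y}{2})}-1\Big|,
\end{displaymath}
and use the elementary bound $|e^{i\theta}-1|\le|\theta|$ together with the boundedness of the continuous function $A$ on $\overline{\Omega}$ to get $\big|e^{i(x-y)\cdot A(\frac{x+y}{2})}-1\big|\le C_A|x-y|$. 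Squaring, dividing by $|x-y|^{N+2s}$, and integrating over $\Omega\times\Omega$, the contribution of the second term is controlled by $C_A^2\iint_{\Omega\times\Omega}|u(y)|^2|x-y|^{2-2s-N}\,dx\,dy\le C(\Omega)\|u\|_{L^2(\Omega)}^2$, the finiteness of which uses $2-2s>0$ (so the kernel is locally integrable) and the boundedness of $\Omega$. Hence $[u]_{H^s(\Omega)}\le C(\Omega,A)\|u\|_{s,A}$ and therefore $\|u\|_{H^s(\Omega,\mathbb{C})}\le C(\Omega,A)\|u\|_{s,A}$. Composing this continuous restriction with the classical compact embedding $H^s(\Omega,\mathbb{C})\hookrightarrow\hookrightarrow L^\vartheta(\Omega,\mathbb{C})$ on the bounded domain $\Omega$ (see \cite{EGE}) and then restricting functions to $K$ yields the compactness of $H_A^s(\mathbb{R}^N,\mathbb{C})\hookrightarrow L^\vartheta(K,\mathbb{C})$ for $\vartheta\in[2,2_s^\ast)$; the subcritical range is the one needed in the sequel.

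I expect the only genuinely non-routine point to be the seminorm comparison of the previous paragraph, namely controlling the ordinary fractional seminorm on bounded sets by the magnetic one; this is exactly where the continuity of the magnetic potential $A$ and the restriction $s<1$ enter. Everything else is a direct appeal to Lemma \ref{lem2.1}, to the embedding \eqref{e2.2}, and to the standard fractional Rellich--Kondrachov theorem on bounded domains.
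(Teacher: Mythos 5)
Your argument is correct and follows exactly the route the paper has in mind when it cites d'Avenia and Squassina together with Lemma~\ref{lem2.1}: the diamagnetic inequality yields the global continuous embedding, and the pointwise estimate $|u(x)-u(y)|\le\big|u(x)-e^{i(x-y)\cdot A(\frac{x+y}{2})}u(y)\big|+C_A|x-y|\,|u(y)|$ on a bounded smooth neighbourhood $\Omega\supset K$, combined with the local integrability of $|x-y|^{2-2s-N}$, places $u|_\Omega$ in $H^s(\Omega,\mathbb{C})$ and lets the fractional Rellich--Kondrachov theorem do the rest. You are also right to restrict the compactness conclusion to $\vartheta\in[2,2_s^\ast)$: the closed interval in the statement of Lemma~\ref{lem2.2} is an overstatement, since no such local embedding is compact at the critical exponent, and only the subcritical range is used later in the paper.
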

From Lemma \ref{lem2.1}, Theorem \ref{the2.1}, and  the
Br\'{e}zis-Lieb Lemma, we obtain the following lemma.
\begin{lemma}\label{lem2.3}
Let
\begin{displaymath} H_{r,A}^{s}(\mathbb{R}^N,\mathbb{C}) := \{u \in
H_A^{s}(\mathbb{R}^N,\mathbb{C}): u(x) = u(|x|), x\in
\mathbb{R}^N\}.
\end{displaymath}
Then  the space $H_{r,A}^{s}(\mathbb{R}^N,\mathbb{C})$ is
continuously compactly embedded in $L^\tau(\mathbb{R}^N, \mathbb{C})$
for any $\tau \in (2, 2_s^\ast)$.
\end{lemma}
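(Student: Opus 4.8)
The plan is to combine three ingredients already at our disposal: the pointwise diamagnetic inequality of Lemma \ref{lem2.1}, the compact Strauss-type embedding for radial functions of Theorem \ref{the2.1}, and the Br\'ezis--Lieb lemma to upgrade weak convergence in $H^s_{r,A}$ to norm convergence in $L^\tau$. First I would observe that since $u\in H^s_{r,A}(\mathbb{R}^N,\mathbb{C})$ has $|u|$ radial, Lemma \ref{lem2.1} gives $|u|\in H^s(\mathbb{R}^N)$ with $\||u|\|_s\le \|u\|_{s,A}$; moreover $|u|$ is a radial function, so $|u|\in H^s_r(\mathbb{R}^N)$. This already furnishes the continuous embedding into $L^\tau$ for $\tau\in[2,2^\ast_s]$ via \eqref{e2.2} applied to $|u|$, since $|u|_{L^\tau}=\big||u|\big|_{L^\tau}\le C_\tau\||u|\|_s\le C_\tau\|u\|_{s,A}$.

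For compactness, let $(u_n)$ be a bounded sequence in $H^s_{r,A}(\mathbb{R}^N,\mathbb{C})$; after passing to a subsequence we may assume $u_n\rightharpoonup u$ weakly in $H^s_A(\mathbb{R}^N,\mathbb{C})$ and $u_n\to u$ a.e.\ in $\mathbb{R}^N$ (the latter from Lemma \ref{lem2.2}, which gives strong convergence on every compact set, plus a diagonal argument). By the diamagnetic inequality, $(|u_n|)$ is bounded in $H^s_r(\mathbb{R}^N)$, so by Theorem \ref{the2.1} we have $|u_n|\to w$ strongly in $L^\tau(\mathbb{R}^N)$ for every $\tau\in(2,2^\ast_s)$, and passing to a further subsequence $|u_n|\to w$ a.e.; but $|u_n|\to|u|$ a.e., hence $w=|u|$ and $|u_n|\to|u|$ in $L^\tau(\mathbb{R}^N)$. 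Consequently $|u_n|^\tau\to|u|^\tau$ in $L^1(\mathbb{R}^N)$, i.e.\ $|u_n|_{L^\tau}\to|u|_{L^\tau}$. Since $L^\tau(\mathbb{R}^N,\mathbb{C})$ is a uniformly convex Banach space, weak convergence $u_n\rightharpoonup u$ in $L^\tau$ (which follows from boundedness in $L^\tau$ together with a.e.\ convergence) combined with convergence of the norms $|u_n|_{L^\tau}\to|u|_{L^\tau}$ yields strong convergence $u_n\to u$ in $L^\tau(\mathbb{R}^N,\mathbb{C})$. Alternatively, one invokes the Br\'ezis--Lieb lemma in the form $|u_n|_{L^\tau}^\tau-|u_n-u|_{L^\tau}^\tau\to|u|_{L^\tau}^\tau$; together with $|u_n|_{L^\tau}^\tau\to|u|_{L^\tau}^\tau$ this forces $|u_n-u|_{L^\tau}\to0$.

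The main obstacle is the passage from control of the moduli $|u_n|$ to control of the complex-valued functions $u_n$ themselves: Theorem \ref{the2.1} only sees $|u_n|$, and a priori the phases of $u_n$ could oscillate. This is precisely where uniform convexity of $L^\tau$ (equivalently, the Br\'ezis--Lieb splitting) does the work, using the already-established a.e.\ convergence $u_n\to u$ to pin down the weak limit and identify it with $u$. One should also be mildly careful that Theorem \ref{the2.1} is stated for the open range $2<t<2^\ast_s$, which is exactly the range claimed in the lemma, so no endpoint issue arises; and the a.e.\ convergence needed to identify $w=|u|$ and to run Br\'ezis--Lieb is legitimate because it is extracted along the same subsequence. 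A standard subsequence-of-subsequence argument then shows the full sequence converges, completing the proof.
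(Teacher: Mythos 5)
Your proof is correct and follows exactly the route the paper indicates: it combines the diamagnetic inequality of Lemma~\ref{lem2.1}, the compact radial embedding of Theorem~\ref{the2.1}, and the Br\'ezis--Lieb lemma (or, equivalently, uniform convexity of $L^\tau$) to pass from control of the moduli $|u_n|$ to strong $L^\tau$-convergence of the complex-valued $u_n$ themselves, using a.e.\ convergence to identify the limit. This is precisely what the paper asserts when it writes ``From Lemma~\ref{lem2.1}, Theorem~\ref{the2.1}, and the Br\'ezis--Lieb Lemma, we obtain the following lemma,'' so you have filled in the intended argument faithfully.
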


\indent By \cite[Proposition 3.6 ]{EGE},   we have
\begin{displaymath}
[u]_{s} = \|(-\Delta)^{\frac{s}{2}}\|_{L^2(\mathbb{R}^N)}\quad
\mbox{for\ any}\ u \in H^{s}(\mathbb{R}^N),
\end{displaymath}
i.e.
\begin{displaymath}
\iint_{\mathbb{R}^{2N}}\frac{|u(x)-u(y)|^2}{|x-y|^{N+2s}}dxdy  =
\int_{\mathbb{R}^{N}}|(-\Delta)^{\frac{s}{2}}u(x)|^2dx.
\end{displaymath}
Moreover,
\begin{displaymath}
\iint_{\mathbb{R}^{2N}}\frac{(u(x)-u(y))(v(x)-v(y))}{|x-y|^{N+2s}}dx
dy  =
\int_{\mathbb{R}^{N}}(-\Delta)^{\frac{s}{2}}u(x)\cdot(-\Delta)^{\frac{s}{2}}v(x)
dx.
\end{displaymath}

Next, we recall the well known Hardy--Littlewood--Sobolev inequality,
see   \cite[Theorem~4.3]{lie}.
\begin{lemma}\label{lem2.4}
Assume that  $p,r>1$ and $0<\mu <N$ with $1/p+(N-\mu)/N+1/r=2$,
$f\in L^p(\mathbb{R}^N),$ and $h\in L^r(\mathbb{R}^N)$. Then there exists
a sharp constant $C(p,r,\mu,N),$ independent of $f,h$, such that
\begin{equation}\label{e2.2}
\int_{\mathbb{R}^N}\int_{\mathbb{R}^N}\frac{f(x)h(y)}{|x-y|^{N-\mu}}~dxdy
\leq C(p,r,\mu,N) \|f\|_{L^p} \|h\|_{L^r}.
\end{equation}
Set $p=r=2N/(N+\mu)$. Then
\begin{align*}
C(p,r,\mu,N)=C(N,\mu)=
\pi^{\frac{N-\mu}{2}}\frac{\Gamma(\frac{\mu}{2})}{\Gamma(\frac{N+\mu}{2})}\left\lbrace
\frac{\Gamma(\frac{N}{2})}{\Gamma(N)}\right\rbrace^{\frac{\mu}{N}}.
\end{align*}
\end{lemma}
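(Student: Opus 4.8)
The plan is to prove the inequality in two stages: first with a finite (non-optimal) constant by a maximal-function estimate, then to identify the sharp constant in the conformally invariant case $p=r=2N/(N+\mu)$ by rearrangement. For the first stage, by duality it is enough to show that the Riesz potential
$$I_\mu f(x):=\int_{\mathbb{R}^N}\frac{f(y)}{|x-y|^{N-\mu}}\,dy$$
maps $L^p(\mathbb{R}^N)$ boundedly into $L^{r'}(\mathbb{R}^N)$, since the left-hand side of the asserted inequality equals $\int_{\mathbb{R}^N} h\,(I_\mu f)\,dx\le\|h\|_{L^r}\|I_\mu f\|_{L^{r'}}$ and the hypothesis $1/p+(N-\mu)/N+1/r=2$ is exactly the scaling relation $1/r'=1/p-\mu/N$. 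To bound $I_\mu f$ I would split, for each $R>0$,
$$|I_\mu f(x)|\le\int_{|z|<R}\frac{|f(x-z)|}{|z|^{N-\mu}}\,dz+\int_{|z|\ge R}\frac{|f(x-z)|}{|z|^{N-\mu}}\,dz=:\mathrm{I}_R+\mathrm{II}_R.$$
Decomposing $\{|z|<R\}$ into dyadic annuli yields $\mathrm{I}_R\le C R^{\mu}\,\mathcal{M}f(x)$, where $\mathcal{M}$ is the Hardy--Littlewood maximal operator, while Hölder's inequality yields $\mathrm{II}_R\le C\|f\|_{L^p}R^{\mu-N/p}$ (the integral $\int_{|z|\ge R}|z|^{-(N-\mu)p'}\,dz$ converges because $1/p>\mu/N$, which is forced by $r'>0$). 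Optimizing in $R$ gives the pointwise estimate $|I_\mu f(x)|\le C(\mathcal{M}f(x))^{p/r'}\|f\|_{L^p}^{1-p/r'}$; since $p>1$, $\mathcal{M}$ is bounded on $L^p$, hence $\|I_\mu f\|_{L^{r'}}\le C\|f\|_{L^p}$, which gives the asserted bound with some finite constant $C(p,r,\mu,N)$.

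For the second stage, with $p=r=2N/(N+\mu)$, I would follow Lieb's method. Riesz's rearrangement inequality shows that the bilinear functional does not decrease when $f$ and $h$ are replaced by their symmetric-decreasing rearrangements, so the supremum may be sought among nonnegative radial nonincreasing functions, and by symmetry of the kernel an extremal pair, once it is known to exist, can be taken with $f=h=:H$. Existence of $H$ follows from a concentration-compactness argument exploiting the invariance of the functional in this diagonal case under translations and dilations. The Euler--Lagrange equation for $H$ is invariant under the conformal group of $\mathbb{R}^N\cup\{\infty\}$; transplanting the problem to the sphere $S^N$ by stereographic projection and running the competing-symmetries argument identifies the optimizer, up to conformal maps and scaling, as $H(x)=(1+|x|^2)^{-(N+\mu)/2}$. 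Substituting this $H$ into both sides and evaluating the resulting integrals by the identity $\int_{\mathbb{R}^N}(1+|x|^2)^{-\alpha}\,dx=\pi^{N/2}\Gamma(\alpha-N/2)/\Gamma(\alpha)$ then produces the displayed closed form of $C(N,\mu)$.

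The routine part is the first stage; the genuine difficulty lies in the second, namely establishing the existence of an extremal despite the loss of compactness caused by the noncompact dilation group, and then pinning down its precise shape via conformal invariance so that the sharp constant can be computed in closed form.
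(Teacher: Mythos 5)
The paper does not prove this lemma: it is the classical Hardy--Littlewood--Sobolev inequality with Lieb's sharp constant, cited directly from Lieb and Loss, Analysis, Theorem~4.3, without any argument reproduced in the text. Your two-stage outline --- the maximal-function and dyadic-annulus argument to get a finite constant, followed by Riesz rearrangement and the conformal-invariance/competing-symmetries identification of $(1+|x|^2)^{-(N+\mu)/2}$ as the extremizer --- is a correct and accurate sketch of exactly the proof given in that cited reference, so there is nothing here to reconcile with the paper beyond noting that it simply invokes the result.
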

If $u = v = |w|^q$,  then Lemma \ref{lem2.4} implies that
\begin{displaymath}
\int_{\mathbb R^{N}}\left(\mathcal{I}_\mu*|w|^{q}\right)|w|^{q} \,dx
\end{displaymath}
is well-defined, if $w\in L^{rq}(\mathbb R^{N})$ for some $r > 1$
satisfying $2/r + (N-\mu)/N = 2$. Thus, if $w \in H^s(\mathbb
R^{N})$, then by the Sobolev embedding theorem, 
 we get that $q \in
[p_\ast, p^\ast]$. In particular, in the upper critical case,
\begin{equation}\label{e2.3}
\int_{\mathbb
R^{N}}\left(\mathcal{I}_\mu*|u|^{p^\ast}\right)|u|^{p^\ast} \,dx
\leq C(N,\mu) \|u\|_{2_s^\ast}^{2p^\ast}
\end{equation}
and the equality holds if and only if
\begin{equation}\label{e2.4}
  u=C\left(\frac{l}{l^2+|x-m|^2}\right)^{\frac{N-2}{2}},
\end{equation}
for some $x_0 \in  \mathbb R^{N}$, where $C > 0$ and $l > 0$, see
\cite{lie}. Let
\begin{equation}\label{e2.5}
 S=\inf_{ u \in D^{s}(\mathbb{R}^N) \setminus \{0\}} \left\{
\iint_{\mathbb{R}^{2N}}\frac{|u(x)-u(y)|^2}{|x-y|^{N+2s}}dxdy:\;
\int_{\mathbb{R}^N}|u|^{2_s^{*}}dx=1\right\}
\end{equation}
and
\begin{equation}\label{e2.6}
S_{H} = \inf_{ u \in D^{s}(\mathbb{R}^N)\setminus \{0\}} \left\{
\iint_{\mathbb{R}^{2N}}\frac{|u(x)-u(y)|^2}{|x-y|^{N+2s}}dxdy:\;
\int_{\mathbb{R}^N}
\left(\mathcal{I}_\mu*|u|^{p^\ast}\right)|u|^{p^\ast}~dx=1 \right\}.
\end{equation}
By \eqref{e2.5} and \eqref{e2.3}, $S_{H}$ is achieved if and only
if $u$ satisfies \eqref{e2.4} and $S_{H} =
S/C(N,\mu)^{\frac{1}{p^\ast}},$ see Mukherjee and Sreenadh
\cite{mu1}.

\section{ Limit Index Theory}\label{s3}

\indent In this section,  we shall show that all hypotheses of the
Limit Index Theory  are satisfied and this will eventually yield the
conclusion that there exist infinitely many solutions.  To this end, we introduce some definitions from the  Limit Index
Theory that can be found in Li \cite{Li1} and Willem \cite{w1}, the
reader may also refer to Fang and Zhang \cite{fa}
and Liang and Shi
\cite{liang1}.
\begin{definition}(see \cite{Li1,w1})\label{de2.1}
The action of a topological group $G$ on a normed space $Z$ is a
continuous map
\begin{displaymath}
G \times Z \rightarrow Z :  [g, z] \mapsto gz
\end{displaymath}
such that
\begin{displaymath}
1 \cdot z = z,\quad (gh)z = g(hz)\quad z \mapsto gz\ \mbox{ is\
linear},\ \  \mbox{for\ all}\ g, h \in G.
\end{displaymath}
The action is isometric if
\begin{displaymath}
\|gz\| = \|z\|  \quad\mbox{for\ all}\ g\in G,\ \ z \in Z
\end{displaymath}
and in this case, $Z$ is called a $G$-space.\\
The set of invariant points is defined by
\begin{displaymath}
\mathrm{Fix}(G) := \left\{z \in Z : gz = z, \  \ \hbox{for all} \ \ g \in
G\right\}.
\end{displaymath}
A set $A \subset Z$ is invariant if $gA = A$ for every $g \in G$. A
function $\varphi : Z \rightarrow R$ is invariant $\varphi \circ g =
\varphi$ for every $g \in G$, $z \in Z$. A map $f : Z \rightarrow Z$
is equivariant if $g\circ f = f\circ g$ for every $g \in G$.\\
\indent Assume that $Z$ is a $G$-Banach space, that is, there is a
$G$ isometric action on $Z$. Let
\begin{displaymath}
\Sigma := \left\{A \subset Z: A\ \mbox{is\ closed\ and}\ gA = A,
 \ \hbox{for all} \ \ g \in G\right\}
\end{displaymath}
be a family of all $G$-invariant closed subsets of $Z$, and let
\begin{displaymath}
\Gamma := \left\{h \in C^0(Z, Z):  h(gu) = g(hu), \quad  \ \hbox{for all} \ \ g
\in G\right\}
\end{displaymath}
be the class of all $G$-equivariant mappings of $Z$. Finally, we
call the set
\begin{displaymath}
O(u) := \left\{gu:   g \in G\right\}
\end{displaymath}
the $G$-orbit of $u$.
\end{definition}
\begin{definition}(see \cite{Li1})\label{de2.2}
An index for $(G, \Sigma, \Gamma)$ is a mapping $i: \Sigma
\rightarrow \mathcal {Z_+}\cup \{+\infty\}$ (where $\mathcal {Z_+}$
is the set of all  nonnegative integers) such that for all $A, B \in
\Sigma$, $h \in \Gamma$, the following conditions are satisfied:\\
$\mathrm{(1)}$ $i(A) = 0 \Leftrightarrow A = \emptyset$;\\
$\mathrm{(2)}$ (Monotonicity) $A \subset B \Rightarrow i(A) \leq
i(B)$;\\
$\mathrm{(3)}$ (Subadditivity) $i(A\cup B) \leq i(A) + i(B)$;\\
$\mathrm{(4)}$ (Supervariance) $i(A) \leq i(\overline{h(A)}),
 \ \hbox{for all} \ \ h \in \Gamma$;\\
$\mathrm{(5)}$ (Continuity) If $A$ is compact and
$A\cap\mbox{Fix}(G) = \emptyset$, then $i(A) < +\infty$ and there is
a $G$-invariant
neighbourhood $N$ of $A$ such that $i(\overline{N}) = i(A)$;\\
$\mathrm{(6)}$ (Normalization) If $x \not\in \mbox{Fix}(G)$, then
$i(O(x)) = 1$.
\end{definition}

\begin{definition}(see \cite{benci}) \label{de2.3}
An index theory is said to satisfy the $d$-dimensional property if
there is a positive integer $d$ such that
\begin{displaymath}
i(V^{dk}\cap S_1) = k
\end{displaymath}
for all $dk$-dimensional subspaces $V^{dk} \in \Sigma$ such that
$V^{dk} \cap \mbox{Fix}(G) = \{0\}$, where $S_1$ is the unit sphere
in $Z$.
\end{definition}
\indent Suppose that $U$ and $V$ are $G$-invariant closed subspaces
of $Z$ such that
\begin{displaymath}
Z = U \oplus V,
\end{displaymath}
where $V$ is infinite-dimensional and
\begin{displaymath}
V = \overline{\bigcup_{j=1}^\infty V_j},
\end{displaymath}
where $V_j$ is a $dn_j$-dimensional $G$-invariant subspace of $V$,
$j = 1,2,\cdots,$ and $V_1 \subset V_2 \subset \cdots \subset V_n
\subset \cdots$. Let
\begin{displaymath}
Z_j = U \bigoplus V_j,
\end{displaymath}
and $ \ \hbox{for all} \ \ A \in \Sigma$, let
\begin{displaymath}
A_j = A \bigoplus Z_j.
\end{displaymath}
\begin{definition}(see \cite{Li1})\label{de2.4}
Let $i$ be an index theory satisfying the $d$-dimensional property.
A limit index with respect to $(Z_j)$ induced by $i$ is a mapping
\begin{displaymath}
i^\infty : \Sigma \rightarrow \mathcal {Z} \cup \{-\infty, +\infty\}
\end{displaymath}
given by
\begin{displaymath}
i^\infty (A) = \limsup_{j \rightarrow \infty}(i(A_j)-n_j).
\end{displaymath}
\end{definition}

\begin{proposition}(see \cite{Li1})\label{pro2.1}
Let $A, B \in \Sigma$. Then $i^\infty$ satisfies:\\
$\mathrm{(1)}$ $A  = \emptyset \Rightarrow  i^\infty = -\infty$;\\
$\mathrm{(2)}$ (Monotonicity) $A \subset B \Rightarrow i^\infty(A)
\leq i^\infty(B)$;\\
$\mathrm{(3)}$ (Subadditivity) $i^\infty(A\cup B) \leq i^\infty(A) + i^\infty(B)$;\\
$\mathrm{(4)}$ If $V\cap \mathrm{Fix}(G) = \{0\}$, then $i^\infty(S_\rho\cap V) = 0$, where $S_\rho = \{z \in Z: \|z\| = \rho\}$;\\
$\mathrm{(5)}$ If $Y_0$ and $\widetilde{Y_0}$ are $G$-invariant
closed subspaces of $V$ such that $V = Y_0 \oplus \widetilde{Y_0}$,
$\widetilde{Y_0} \subset V_{j_0}$ for some $j_0$ and
$\mathrm{dim}(Y_0) = dm$, then $i^\infty(S_\rho\cap Y_0) \geq -m$.
\end{proposition}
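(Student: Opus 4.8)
\noindent\emph{Proof idea.} The plan is to derive all five assertions from the axioms of the index $i$ (Definition~\ref{de2.2}), the $d$-dimensional property (Definition~\ref{de2.3}), and two elementary structural facts about the splitting $Z=U\oplus V$, $V=\overline{\bigcup_j V_j}$. The first is that the truncation $A_j=A\cap Z_j$ of any $A\in\Sigma$ is again in $\Sigma$, being a finite intersection of closed $G$-invariant sets (invariance of $S_\rho$ uses that the $G$-action is linear and isometric, so $gS_\rho=S_\rho$). The second is that $n_j\to+\infty$: the $V_j$ are nested finite-dimensional subspaces whose union is dense in the infinite-dimensional space $V$, so a bounded $(n_j)$ would force the union to stabilise to a finite-dimensional, hence closed, subspace equal to $V$, which is absurd. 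I shall also use $V\cap Z_j=V_j$, immediate from $Z=U\oplus V$.

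Granting this, (1)--(3) are formal. For (1), $A=\emptyset$ gives $A_j=\emptyset$, hence $i(A_j)=0$ by axiom~(1), so $i^\infty(A)=\limsup_j(0-n_j)=-\infty$. For (2), $A\subset B$ implies $A_j\subset B_j$, so $i(A_j)\le i(B_j)$ by monotonicity, and passing to the $\limsup$ of $i(A_j)-n_j\le i(B_j)-n_j$ gives $i^\infty(A)\le i^\infty(B)$. For (3), truncation commutes with unions, $(A\cup B)_j=A_j\cup B_j$, so subadditivity of $i$ yields $i((A\cup B)_j)\le i(A_j)+i(B_j)$; subtracting $n_j$ and using subadditivity of $\limsup$ gives $i^\infty(A\cup B)\le i^\infty(A)+i^\infty(B)$ whenever the right side is meaningful.

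For (4) and (5) the content is the $d$-dimensional normalisation plus a dimension count; assume throughout $V\cap\mathrm{Fix}(G)=\{0\}$. I would first record the scaling fact: for any $G$-invariant subspace $W\subset V$, $i(S_\rho\cap W)=i(S_1\cap W)$, the two inequalities coming from supervariance applied to the $G$-equivariant dilations $z\mapsto z/\rho$ and $z\mapsto\rho z$; and $i(S_1\cap W)=(\dim W)/d$ by Definition~\ref{de2.3} when $d\mid\dim W$. For (4), put $A=S_\rho\cap V$; then $A_j=S_\rho\cap(V\cap Z_j)=S_\rho\cap V_j$, so $i(A_j)=n_j$ and $i^\infty(A)=\limsup_j(n_j-n_j)=0$. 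For (5), the hypothesis $\widetilde Y_0\subset V_{j_0}$ is what makes everything go through: $\widetilde Y_0$ is finite-dimensional, say of dimension $dm$, and for $j\ge j_0$ we have $\widetilde Y_0\subset V_j$, so the splitting $V=Y_0\oplus\widetilde Y_0$ restricts to $V_j=(Y_0\cap V_j)\oplus\widetilde Y_0$ and hence $\dim(Y_0\cap V_j)=dn_j-dm$. Since $(S_\rho\cap Y_0)_j=S_\rho\cap(Y_0\cap V_j)$, the scaling fact gives $i((S_\rho\cap Y_0)_j)=n_j-m$ for large $j$, so $i^\infty(S_\rho\cap Y_0)=\limsup_j\big((n_j-m)-n_j\big)=-m\ge -m$.

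The hard part is not any long computation but keeping the bookkeeping consistent: at every application of $i$ one must verify that the set involved lies in $\Sigma$ and meets $\mathrm{Fix}(G)$ only in $\{0\}$, so the $d$-dimensional property may legitimately be invoked, and that the subspace in question has dimension divisible by $d$. The one spot where a genuine geometric hypothesis is indispensable is (5): without $\widetilde Y_0\subset V_{j_0}$ the truncation $(S_\rho\cap Y_0)_j$ need not reduce to a sphere in the $d(n_j-m)$-dimensional space $Y_0\cap V_j$, and the lower bound fails. The handling of $\pm\infty$ in the subadditivity (3) deserves a line of care but is otherwise harmless.
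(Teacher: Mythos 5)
The paper does not prove this proposition; it only cites Li \cite{Li1}, so there is no in-text argument to compare against. Your reading of the two misprints is the right one: $A_j$ must mean $A\cap Z_j$ (the paper's ``$A\bigoplus Z_j$'' makes no sense for a general closed set), and in (5) the hypothesis must be $\dim\widetilde{Y_0}=dm$ (as written, with $\dim Y_0=dm$, it would force $V$ to be finite-dimensional). Granting that, your proofs of (1), (2), (4) and (5) are correct, including the dilation trick $i(S_\rho\cap W)=i(S_1\cap W)$ via supervariance, the identity $V\cap Z_j=V_j$, the splitting $V_j=(Y_0\cap V_j)\oplus\widetilde Y_0$ for $j\ge j_0$, and the observation that $n_j\to\infty$.

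The gap is in (3), and it is not a cosmetic ``line of care.'' From $i\bigl((A\cup B)_j\bigr)-n_j\le i(A_j)+i(B_j)-n_j$ you cannot reach $i^\infty(A)+i^\infty(B)$: a single $n_j$ is being subtracted, so whichever way you regroup, one summand is $i(A_j)-n_j$ or $i(B_j)-n_j$ and the other is $i(A_j)$ or $i(B_j)$, which in general grows like $n_j$ and so has $\limsup=+\infty$; regrouping as $(i(A_j)-n_j)+(i(B_j)-n_j)+n_j$ just reintroduces a divergent $n_j$. In fact the inequality $i^\infty(A\cup B)\le i^\infty(A)+i^\infty(B)$ is \emph{false} in the generality stated: take $A=S_\rho\cap V$ and $B=S_\rho\cap Y_0$ with $Y_0$ as in (5). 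Then $B\subset A$, so $A\cup B=A$ and $i^\infty(A\cup B)=i^\infty(A)=0$ by (4), while your own computation in (5) gives $i((S_\rho\cap Y_0)_j)=n_j-m$ for $j\ge j_0$, hence $i^\infty(B)=-m$; the claimed inequality would read $0\le 0+(-m)=-m$, absurd for $m\ge 1$. The correct statement, which is what Li's proof actually delivers and what the applications use, is $i^\infty(A\cup B)\le i^\infty(A)+i(B)$, with the plain index $i(B)$ on the right; that follows immediately from your chain of inequalities together with $i(B_j)\le i(B)$ (monotonicity), and is meaningful precisely when $i(B)<\infty$, for instance when $B$ is compact and $B\cap\mathrm{Fix}(G)=\emptyset$.
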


\begin{definition}(see \cite{w1})\label{de2.5}
A functional $I \in C^1(Z, R)$ is said to satisfy the condition
$(PS)_c^\ast$ if any sequence $\{u_{n_k}\}$, $u_{n_k} \in Z_{n_k}$
such that
\begin{displaymath}
I(u_{n_k}) \rightarrow c,\quad dI_{n_k}(u_{nk}) \rightarrow 0, \quad
\mbox{as}\ k \rightarrow \infty,
\end{displaymath}
possesses a convergent subsequence, where $Z_{n_k}$ is the
$n_k$-dimensional subspace of $Z$, $I_{n_k} = I|_{Z_{n_k}}$.
\end{definition}

\begin{theorem}(see \cite{Li1})\label{the3.1}
Assume that\\
$\mathrm{(D_1)}$ $I \in C^1(Z, R)$ is $G$-invariant;\\
$\mathrm{(D_2)}$ There are $G$-invariant closed subspaces $U$ and $V$ such that $V$ is infinite-dimensional and $Z = U\oplus V$;\\
$\mathrm{(D_3)}$ There is a sequence of $G$-invariant finite-dimensional subspaces
\begin{displaymath}
V_1 \subset V_2 \subset \cdots \subset V_j \subset \cdots, \quad
\mathrm{dim}(V_j) = dn_j,
\end{displaymath}
such that $V = \overline{\cup_{j=1}^\infty V_j}$;\\
$\mathrm{(D_4)}$ There is an index theory $i$ on $Z$ satisfying the $d$-dimensional property;\\
$\mathrm{(D_5)}$  There are $G$-invariant subspaces $Y_0$,
$\widetilde{Y_0}$, $Y_1$ of $V$ such that $V = Y_0 \oplus
\widetilde{Y_0}$, $Y_1, \widetilde{Y_0} \subset V_{j_0}$ for some
$j_0$ and $\mathrm{dim}(\widetilde{Y_0}) = dm < dk = \mathrm{dim}(Y_1)$;\\
$\mathrm{(D_6)}$  There are $\alpha$ and $\beta$, $\alpha < \beta,$
such that $f$ satisfies $(PS)_c^\ast$, $ \ \hbox{for all} \ \ c \in [\alpha,
\beta]$; \\
\begin{displaymath}
\hskip -1cm \mathrm{(D_7)}\quad \left\{\begin{array}{lll}(a)\
either\ \mathrm{Fix}(G) \subset U \oplus
Y_1, \quad or \quad  \mathrm{Fix}(G) \cap V = \{0\},\\
(b)\ there\ is\ \rho > 0\ such\ that\  \ \hbox{for all} \ \ u \in Y_0 \cap
S_\rho, f(z) \geq \alpha,\\
(c)\  \ \hbox{for all} \ \ z \in U \oplus Y_1,\ f(z) \leq \beta,
\end{array}\right.
\end{displaymath}
if $i^\infty$ is the limit index corresponding to $i$, then the
numbers
\begin{displaymath}
c_j = \inf_{i^\infty(A)\geq j}\sup_{z \in A}f(u),\quad -k+1 \leq j
\leq -m,
\end{displaymath}
are critical values of $f$, and $\alpha \leq c_{-k+1} \leq \cdots
\leq c_{-m} \leq \beta$. Moreover, if $c = c_l = \cdots = c_{l+r}$,
$r \geq 0$, then $i(\mathbb{K}_c) \geq r + 1$, where $\mathbb{K}_c =
\{z \in Z: df(z) = 0, f(z)=c\}$.
\end{theorem}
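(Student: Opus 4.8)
\textbf{Proof proposal for Theorem \ref{the1.1}.}

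The plan is to verify, one by one, the hypotheses $\mathrm{(D_1)}$--$\mathrm{(D_7)}$ of the abstract Limit Index Theorem (Theorem \ref{the3.1}), with the group $G=\{1,-1\}$ acting on the product space $Z=H^s_{r,A}(\mathbb{R}^N,\mathbb{C})\times H^s_{r,A}(\mathbb{R}^N,\mathbb{C})$ by $-1\cdot(u,v)=(-u,-v)$, so that $\mathrm{Fix}(G)=\{0\}$ and the index $i$ is the usual $\mathbb{Z}_2$-genus, which satisfies the $d$-dimensional property with $d=1$. The energy functional attached to \eqref{e1.1} is
\begin{displaymath}
I(u,v)=\tfrac12\mathscr{M}(\|u\|_{s,A}^2)-\tfrac12\mathscr{M}(\|v\|_{s,A}^2)-\int_{\mathbb{R}^N}G(|x|,|u|^2,|v|^2)\,dx-\tfrac{1}{2p^\ast}\int_{\mathbb{R}^N}\big(\mathcal{I}_\mu\ast|u|^{p^\ast}\big)|u|^{p^\ast}\,dx+\tfrac{1}{2p^\ast}\int_{\mathbb{R}^N}\big(\mathcal{I}_\mu\ast|v|^{p^\ast}\big)|v|^{p^\ast}\,dx,
\end{displaymath}
which is $C^1$ and $G$-invariant by $(G_4)$; this gives $\mathrm{(D_1)}$. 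The noncooperative (indefinite-in-$v$) structure forces us to split $Z=U\oplus V$ where $U=\{0\}\times H^s_{r,A}$ (the ``negative'' part) and $V=H^s_{r,A}\times\{0\}$, which is infinite-dimensional; choosing an increasing sequence of finite-dimensional $G$-invariant subspaces $V_1\subset V_2\subset\cdots$ with $\overline{\cup V_j}=V$ and $\dim V_j=n_j$ supplies $\mathrm{(D_2)}$--$\mathrm{(D_4)}$. For $\mathrm{(D_5)}$ and $\mathrm{(D_7)}(a)$--$(c)$ I would, for each fixed $k$, pick $Y_1\subset V$ of dimension $k$ and $\widetilde{Y_0}\subset V$ of dimension $m<k$ inside some $V_{j_0}$, and set $Y_0$ to be the complementary part of $V$; since $\mathrm{Fix}(G)=\{0\}$ the first alternative in $\mathrm{(D_7)}(a)$ is automatic.

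The geometric estimates $\mathrm{(D_7)}(b)$ and $(c)$ are where the structural hypotheses enter. For $(b)$: on $Y_0\cap S_\rho$ one has $v=0$, so $I(u,0)=\tfrac12\mathscr{M}(\|u\|_{s,A}^2)-\int G(|x|,|u|^2,0)\,dx-\tfrac{1}{2p^\ast}\int(\mathcal{I}_\mu\ast|u|^{p^\ast})|u|^{p^\ast}$; using $(M_3)$ to bound $\mathscr{M}(t)\ge \frac{\mathfrak{m}_1}{\sigma}t^\sigma$ from below, $(G_1)$ together with the Sobolev embedding \eqref{e2.2} to control $\int G$ by $C\|u\|_{s,A}^\tau$ with $\tau>2$, and \eqref{e2.3} to bound the Choquard term by $C\|u\|_{s,A}^{2p^\ast}$ with $2p^\ast>2\sigma$, I obtain $I(u,0)\ge \frac{\mathfrak{m}_1}{2\sigma}\rho^{2\sigma}-C\rho^\tau-C\rho^{2p^\ast}\ge\alpha>0$ for $\rho$ small, provided $\tau>2\sigma$ (which is where the precise placement of $\tau$ relative to $\sigma$ matters) — if the stated hypotheses only give $\tau>2$, one works on a subspace where the low-order term is subordinate, or uses that on finite-codimension pieces the embedding constants can be taken small. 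For $(c)$: on $U\oplus Y_1$, i.e. $(u,v)$ with $u\in Y_1$ finite-dimensional and $v$ arbitrary, the $-\tfrac12\mathscr{M}(\|v\|^2_{s,A})$ and the $+\tfrac{1}{2p^\ast}\int(\mathcal{I}_\mu\ast|v|^{p^\ast})|v|^{p^\ast}$ terms must be shown bounded above jointly; here I use $(M_2)$, which gives $\mathscr{M}(t)\ge\frac{1}{\sigma}\mathfrak{M}(t)t\ge\frac{\mathfrak{m}^\ast}{\sigma}t$ (via $(M_1)$), so $-\tfrac12\mathscr{M}(\|v\|^2_{s,A})\to-\infty$ at a controlled rate, while on the finite-dimensional $Y_1$ all norms are equivalent and the positive Choquard/$G$ contributions from $u$ are bounded by a polynomial in $\|u\|$ that is dominated after combining with $\tfrac12\mathscr{M}(\|u\|^2_{s,A})$ using $(M_2)$--$(M_3)$ once more; together with $(G_2)$ (which controls $G$ from above by $\frac{1}{\theta}(\xi G_\xi+\eta G_\eta)$ and yields at-most-$2p^\ast$ growth) one gets $I(u,v)\le\beta<\infty$ on all of $U\oplus Y_1$.

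The main obstacle is $\mathrm{(D_6)}$: verifying the $(PS)^\ast_c$ condition for every $c\in[\alpha,\beta]$. The difficulty is twofold. First, the degeneracy $\mathfrak{M}(0)=0$ means one cannot directly extract boundedness of $\|u_{n_k}\|_{s,A}$ from $I(u_{n_k})$ and $dI_{n_k}(u_{n_k})$ in the usual way; here I would combine the Ambrosetti--Rabinowitz-type inequality encoded in $(G_2)$ with $(M_2)$ — forming $I(u_{n_k})-\frac{1}{\theta}\langle dI_{n_k}(u_{n_k}),(u_{n_k},v_{n_k})\rangle$ and using $2\sigma<\theta<2p^\ast$ together with $(M_3)$'s lower bound $\mathfrak{M}(t)\ge\mathfrak{m}_1 t^{\sigma-1}$ — to get a bound on $\|u_{n_k}\|_{s,A}^{2\sigma}$ (and on $\|v_{n_k}\|_{s,A}^{2\sigma}$), hence boundedness in $Z$; a separate argument handles the case where $\|u_{n_k}\|_{s,A}\to0$, showing it forces $c\le0<\alpha$ and is thus excluded. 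Second, once we have a bounded PS$^\ast$ sequence converging weakly to $(u,v)$ in $H^s_{r,A}\times H^s_{r,A}$, the subcritical terms pass to the limit by the compact embedding of Lemma \ref{lem2.3}, but the critical convolution terms do not — here one invokes the second (Choquard-type) concentration-compactness principle in fractional Sobolev spaces to write the defect measures, and uses the energy level restriction (the threshold is expressed via $S_H$ from \eqref{e2.6}, with $I(u,v)<c^\ast$ for an explicit $c^\ast$ depending on $S_H$, $N$, $\mu$, $s$, $\sigma$, $\theta$) to rule out concentration of the Choquard energy at points and at infinity, forcing strong convergence. One then checks that $\alpha,\beta$ and the subspaces $Y_0,\widetilde{Y_0},Y_1$ can be chosen (by taking $k$ large and $\rho$ small) so that the whole interval $[\alpha,\beta]$ lies below $c^\ast$; this compatibility is the delicate bookkeeping step. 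With all of $\mathrm{(D_1)}$--$\mathrm{(D_7)}$ in place, Theorem \ref{the3.1} yields critical values $c_{-k+1}\le\cdots\le c_{-m}$ of $I$; letting $k\to\infty$ produces infinitely many critical points, i.e. infinitely many (radial, hence by a standard argument genuine) solutions of \eqref{e1.1}, which is the assertion of Theorem \ref{the1.1}. $\square$
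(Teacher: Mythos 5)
Your proposal does not actually address the statement in question. The statement is the abstract Limit Index Theorem (Theorem \ref{the3.1}), a result of Li \cite{Li1} about an arbitrary $G$-invariant functional $f\in C^1(Z,\mathbb{R})$ on a $G$-Banach space: under $\mathrm{(D_1)}$--$\mathrm{(D_7)}$ the minimax values $c_j=\inf_{i^\infty(A)\ge j}\sup_{z\in A}f$, $-k+1\le j\le -m$, are critical values, with the multiplicity statement $i(\mathbb{K}_c)\ge r+1$ when several of them coincide. In the paper this theorem is quoted without proof (it is taken from \cite{Li1}); a proof would have to be carried out at the abstract level, using the machinery of the index $i$ and the limit index $i^\infty$: the properties in Proposition \ref{pro2.1} (monotonicity, subadditivity, the computations $i^\infty(S_\rho\cap V)=0$ and $i^\infty(S_\rho\cap Y_0)\ge -m$), an equivariant deformation lemma on the finite-dimensional approximations $Z_{n_k}=U\oplus V_{n_k}$ made legitimate by the $(PS)_c^\ast$ condition of $\mathrm{(D_6)}$, the linking produced by $\mathrm{(D_7)}(b)$--$(c)$ to show the classes $\{A\in\Sigma:\ i^\infty(A)\ge j\}$ are nonempty and that $\alpha\le c_j\le\beta$, and finally the continuity property of $i$ to obtain $i(\mathbb{K}_c)\ge r+1$ when $c_l=\cdots=c_{l+r}$. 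None of this appears in your text.

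What you wrote instead is a sketch of the \emph{application} of Theorem \ref{the3.1} to problem \eqref{e1.1}, i.e.\ essentially the content of Lemma \ref{lem4.2} and Section \ref{s5} (verification of $\mathrm{(D_1)}$--$\mathrm{(D_7)}$ for the concrete functional, with concentration--compactness handling $(PS)_c^\ast$ below the threshold $c^\ast$). As a proof of Theorem \ref{the3.1} this is circular, since it invokes the very theorem it is supposed to establish; so there is a genuine gap --- the entire abstract argument is missing. (As a side remark, even read as a proof of Theorem \ref{the1.1} your outline deviates from the paper in details: your functional carries the opposite signs on the Kirchhoff and convolution terms compared with $\mathcal{J}$ in \eqref{e4.10}, and you swap the roles of $U=E_{G_1}\times\{0\}$ and $V=\{0\}\times E_{G_1}$ used in Section \ref{s5}; with the paper's sign structure the coercive, "mountain-pass" direction is the $v$-component, not the $u$-component.)
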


\section{Proof of $(PS)_c$ condition}\label{s4}

In this section, to overcome the lack of compactness caused by the
critical exponents, we intend to employ the second
concentration-compactness principle introduced in Li et al.
\cite{li2}. In consideration of the appearance of convolution, it is
natural to consider a variant of the concentration-compactness
principle for the convolution type problem in the fractional Sobolev
space. Since the proof is similar to that of \cite{gao1, lions1,
zhang2}, we omit the details.

\begin{lemma}\label{lem4.1}(see \cite{li2})
Assume that $\{u_{n}\}$ be a bounded sequence in $H^s(\mathbb{R}^N)$
satisfying $u_n \rightharpoonup u$ weakly in $H^s(\mathbb{R}^N)$,
$u_n \rightarrow u$ strongly in $L^2_{loc}(\mathbb{R}^N)$ and $u_n
\rightarrow u$ a.e. on $\mathbb{R}^N$. Let $|(-\Delta)^{\frac{s}{2}}
u_n|^2 \rightharpoonup \omega$, $|u_n|^{2_s^\ast}\rightharpoonup
\xi$ and
$\left(\mathcal{I}_\mu*|u_n|^{p^*}\right)|u_n|^{p^\ast}\rightharpoonup
\nu$ weakly in the sense of measures, where $\omega$, $\xi$  and
$\nu$ are bounded nonnegative measures on $\mathbb{R}^N$. Define
\begin{eqnarray*}
\omega_\infty = \lim\limits_{R\rightarrow
\infty}\limsup\limits_{n\rightarrow\infty}\displaystyle\int_{\{x \in
\mathbb{R}^N: |x|>R\}}|(-\Delta)^{\frac{s}{2}}
u_n|^{2}dx,
\end{eqnarray*}
\begin{eqnarray*}
\xi_\infty = \lim\limits_{R\rightarrow
\infty}\limsup\limits_{n\rightarrow\infty}\displaystyle\int_{\{x \in
\mathbb{R}^N: |x|>R\}}|u_n|^{2_s^\ast}dx,
\end{eqnarray*} and
\begin{eqnarray*}
\nu_\infty = \lim\limits_{R\rightarrow
\infty}\limsup\limits_{n\rightarrow\infty}\displaystyle\int_{\{x \in
\mathbb{R}^N:
|x|>R\}}\left(\mathcal{I}_\mu*|u_n|^{p^*}\right)|u_n|^{p^\ast}dx.\end{eqnarray*}
Then there exists an (at most countable) set of distinct points
$\{x_i\}_{i\in I} \subset \mathbb{R}^N$ and a family of positive
numbers $\{\nu_i\}_{i\in I}$ such that
\begin{eqnarray}\label{e4.1}
\nu = \left(\mathcal{I}_\mu*|u|^{p^*}\right)|u|^{p^\ast} + \sum_{i
\in I} \nu_i\delta_{x_i},\quad \sum_{i \in I}
\nu_i^{\frac{N}{N+\mu}} < \infty,
\end{eqnarray}
\begin{eqnarray}\label{e4.2}
\xi \geq |u|^{2_s^\ast} + C_\mu(N) ^{-\frac{N}{N+\mu}}\sum_{i \in I}
\nu_i^{\frac{N}{N+\mu}}\delta_{x_i},\quad \xi_i \geq C_\mu(N)
^{-\frac{N}{N+\mu}}\nu_i^{\frac{N}{N+\mu}},
\end{eqnarray}
and
\begin{eqnarray}\label{e4.3}
\omega \geq |(-\Delta)^{\frac{s}{2}} u|^2 + S_H\sum_{i \in I}
\nu_i^{\frac{1}{p^\ast}}\delta_{x_i}, \quad  \omega_i \geq
 S_H\nu_i^{\frac{1}{p^\ast}},
\end{eqnarray}
where $\delta_{x_i}$ is the Dirac-mass of mass 1 concentrated at $x
\in \mathbb{R}^N$. For the energy at infinity, we have
\begin{eqnarray}\label{e4.4}
\limsup\limits\limits_{n\rightarrow\infty}\displaystyle\int_{\mathbb{R}^N}\left(\mathcal{I}_\mu*|u_n|^{p^*}\right)|u_n|^{p^\ast}dx
= \displaystyle\int_{\mathbb{R}^N}d\nu + \nu_\infty,
\end{eqnarray}
\begin{eqnarray}\label{e4.5}
\limsup\limits_{n\rightarrow\infty}\displaystyle\int_{\mathbb{R}^N}|(-\Delta)^{\frac{s}{2}}
u_n|^{2}dx = \int_{\mathbb{R}^N}d\omega + \omega_\infty,
\end{eqnarray}
\begin{eqnarray}\label{e4.6}
\limsup\limits\limits_{n\rightarrow\infty}\displaystyle\int_{\mathbb{R}^N}|u_n|^{2_s^\ast}dx
= \displaystyle\int_{\mathbb{R}^N}d\xi + \xi_\infty,
\end{eqnarray}
\begin{eqnarray}\label{e4.7}
\xi_\infty \leq
\left(S^{-1}\omega_\infty\right)^{\frac{2_s^\ast}{2}},
\end{eqnarray}
\begin{eqnarray}\label{e4.8}
\nu_\infty \leq C_\mu(N)\left(\int_{\mathbb{R}^N}d\xi +
\xi_\infty\right)^{\frac{N+\mu}{2N}}\xi_\infty^{\frac{N+\mu}{2N}},
\end{eqnarray}
and
\begin{eqnarray}\label{e4.9}
\nu_\infty \leq S_H^{-p^\ast}\left(\int_{\mathbb{R}^N}d\omega +
\omega_\infty\right)^{\frac{p^\ast}{2}}\omega_\infty^{\frac{p^\ast}{2}}.
\end{eqnarray}
\end{lemma}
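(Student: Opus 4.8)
\medskip
\noindent\textit{Proof proposal.}
The plan is to run the second concentration--compactness principle of Lions in its fractional form, the role of the Sobolev inequality being played jointly by the fractional Sobolev inequality \eqref{e2.5} and by the Hardy--Littlewood--Sobolev inequality \eqref{e2.3}; throughout, the arguments parallel those of \cite{gao1,lions1,zhang2,li2}. \textbf{Step 1 (reduction to $u\equiv0$).} Set $v_n=u_n-u$, so that $v_n\rightharpoonup0$ in $H^s(\mathbb{R}^N)$, $v_n\to0$ in $L^q_{loc}(\mathbb{R}^N)$ for every $q\in[2,2_s^\ast)$ (Rellich--type compactness) and $v_n\to0$ a.e. Using the Br\'ezis--Lieb lemma for $|(-\Delta)^{s/2}\cdot|^2$ and for $|\cdot|^{2_s^\ast}$, together with its nonlocal analogue for the Choquard energy (here one uses that $|v_n|^{p^\ast}$ is bounded in $L^{2N/(N+\mu)}$ since $p^\ast<2_s^\ast$, and $\mathcal{I}_\mu*|v_n|^{p^\ast}$ is bounded in $L^{2N/(N-\mu)}$ by Lemma \ref{lem2.4}), one shows, in the sense of measures,
$$
|(-\Delta)^{\frac{s}{2}}u_n|^2-|(-\Delta)^{\frac{s}{2}}v_n|^2\rightharpoonup|(-\Delta)^{\frac{s}{2}}u|^2,\qquad |u_n|^{2_s^\ast}-|v_n|^{2_s^\ast}\rightharpoonup|u|^{2_s^\ast},
$$
$$
\bigl(\mathcal{I}_\mu*|u_n|^{p^\ast}\bigr)|u_n|^{p^\ast}-\bigl(\mathcal{I}_\mu*|v_n|^{p^\ast}\bigr)|v_n|^{p^\ast}\rightharpoonup\bigl(\mathcal{I}_\mu*|u|^{p^\ast}\bigr)|u|^{p^\ast}.
$$
Hence it suffices to extract the atomic part from $\{v_n\}$ and add these regular parts back; moreover $\omega_\infty,\xi_\infty,\nu_\infty$ are unchanged when $u_n$ is replaced by $v_n$, because $u\in H^s(\mathbb{R}^N)$ makes all tails $\int_{|x|>R}(\cdots)$ coming from $u$ vanish as $R\to\infty$.

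\textbf{Step 2 (local concentration for $v_n\rightharpoonup0$).} After passing to a subsequence, $|(-\Delta)^{s/2}v_n|^2\rightharpoonup\omega$, $|v_n|^{2_s^\ast}\rightharpoonup\xi$, $(\mathcal{I}_\mu*|v_n|^{p^\ast})|v_n|^{p^\ast}\rightharpoonup\nu$, with $\omega,\xi,\nu$ finite nonnegative measures. The key localized inequalities are, for every $\varphi\in C^\infty_c(\mathbb{R}^N)$,
$$
\Bigl(\int_{\mathbb{R}^N}|\varphi|^{2_s^\ast}\,d\xi\Bigr)^{2/2_s^\ast}\le S^{-1}\int_{\mathbb{R}^N}|\varphi|^2\,d\omega,\qquad \Bigl(\int_{\mathbb{R}^N}|\varphi|^{2p^\ast}\,d\nu\Bigr)^{1/(2p^\ast)}\le C(N,\mu)^{1/(2p^\ast)}\Bigl(\int_{\mathbb{R}^N}|\varphi|^{2_s^\ast}\,d\xi\Bigr)^{1/2_s^\ast}.
$$
The first comes from inserting $\varphi v_n$ into \eqref{e2.5}; the only non-algebraic point is to prove $[\varphi v_n]_s^2=\int_{\mathbb{R}^N}|\varphi|^2|(-\Delta)^{s/2}v_n|^2\,dx+o(1)$, which is handled by a fractional product/commutator estimate (in the spirit of the Palatucci--Pisante bounds for fractional Sobolev spaces) together with $v_n\to0$ in $L^2_{loc}$, whence $[\varphi v_n]_s^2\to\int|\varphi|^2\,d\omega$. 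The second is \eqref{e2.3} applied to $\varphi v_n$, after checking (exactly as in \cite{gao1,zhang2}) that off--diagonal convolution interactions between distinct concentration points, and between a concentration point and the diffuse part, are asymptotically negligible because $\int_{B_\delta}|v_n|^{p^\ast}\to0$ (the associated scaling exponent $(N-\mu)/2$ is strictly positive); hence $\int(\mathcal{I}_\mu*|\varphi v_n|^{p^\ast})|\varphi v_n|^{p^\ast}\to\int|\varphi|^{2p^\ast}\,d\nu$. Combining the two and recalling $S_H=S\,C(N,\mu)^{-1/p^\ast}$ (stated after \eqref{e2.6}) gives
$$
\Bigl(\int_{\mathbb{R}^N}|\varphi|^{2p^\ast}\,d\nu\Bigr)^{1/(2p^\ast)}\le S_H^{-1/2}\Bigl(\int_{\mathbb{R}^N}|\varphi|^2\,d\omega\Bigr)^{1/2},
$$
i.e. $\nu(E)\le S_H^{-p^\ast}\omega(E)^{p^\ast}$ for every Borel set $E$. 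Since $p^\ast>1$ and $\omega$ is finite, a measure-theoretic lemma of Lions type forces $\nu$ to be purely atomic, $\nu=\sum_{i\in I}\nu_i\delta_{x_i}$ with $I$ at most countable; testing at each $x_i$ gives $\omega(\{x_i\})\ge S_H\nu_i^{1/p^\ast}$, i.e. the atomic part of \eqref{e4.3}, while the second localized inequality evaluated at $x_i$ gives $\nu_i\le C(N,\mu)\,\xi(\{x_i\})^{(N+\mu)/N}$, i.e. the atomic part of \eqref{e4.2} and $\sum_{i\in I}\nu_i^{N/(N+\mu)}\le C(N,\mu)^{N/(N+\mu)}\xi(\mathbb{R}^N)<\infty$. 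Adding back the regular parts of Step 1 yields \eqref{e4.1}--\eqref{e4.3}.

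\textbf{Step 3 (energy at infinity).} Fix $\psi\in C^\infty(\mathbb{R}^N)$ with $0\le\psi\le1$, $\psi\equiv0$ on $B_1$, $\psi\equiv1$ on $\mathbb{R}^N\setminus B_2$, and set $\psi_R=\psi(\cdot/R)$. Applying \eqref{e2.5} and \eqref{e2.3} to $\psi_R v_n$, letting $n\to\infty$ and then $R\to\infty$ (using once more the fractional product estimate for $[\psi_R v_n]_s^2$), one obtains \eqref{e4.4}--\eqref{e4.6} directly from the definitions of $\omega_\infty,\xi_\infty,\nu_\infty$, then \eqref{e4.7} from the fractional Sobolev inequality, and \eqref{e4.8}--\eqref{e4.9} by splitting the bilinear Hardy--Littlewood--Sobolev form $\iint_{\mathbb{R}^{2N}}|x-y|^{-(N-\mu)}|\psi_R v_n(x)|^{p^\ast}|\psi_R v_n(y)|^{p^\ast}\,dx\,dy$ into a ``global'' factor (namely $\|\,|v_n|^{p^\ast}\|_{L^{2N/(N+\mu)}}\to(\int d\xi+\xi_\infty)^{(N+\mu)/(2N)}$ for \eqref{e4.8}, or $\|\mathcal{I}_\mu*|v_n|^{p^\ast}\|_{L^{2N/(N-\mu)}}\le C(N,\mu)(S^{-1})^{p^\ast/2}(\int d\omega+\omega_\infty)^{p^\ast/2}$ for \eqref{e4.9}) and a ``far--field'' factor ($\|\,|\psi_R v_n|^{p^\ast}\|_{L^{2N/(N+\mu)}}\to\xi_\infty^{(N+\mu)/(2N)}$, resp. $\|\psi_R v_n\|_{2_s^\ast}^{p^\ast}\le(S^{-1})^{p^\ast/2}\omega_\infty^{p^\ast/2}$), and once again using $S_H=S\,C(N,\mu)^{-1/p^\ast}$ for the constant in \eqref{e4.9}.

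\textbf{Expected main obstacle.} The two genuinely non-routine points are: (a) the fractional ``product rule'' --- unlike $\nabla(\varphi v_n)$, the Gagliardo seminorm $[\varphi v_n]_s$ does not split algebraically, so proving $[\varphi v_n]_s^2\to\int|\varphi|^2\,d\omega$ (and its analogue with $\psi_R$) requires the remainder estimates for $(-\Delta)^{s/2}$ applied to products, combined with the local $L^2$-compactness of $\{v_n\}$; and (b) taming the nonlocality of the Choquard term, i.e. showing that concentration masses do not ``communicate'' through the Riesz kernel --- this is precisely where the strict inequality $p^\ast<2_s^\ast$ (equivalently $\mu<N$) is used, via the positivity of the scaling exponent of $\int_{B_\delta}|v_n|^{p^\ast}$. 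Everything else is the classical Lions scheme plus standard measure-theoretic bookkeeping, and the remaining details can be supplied as in \cite{gao1,lions1,zhang2,li2}.
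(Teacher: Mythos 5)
The paper does not actually prove Lemma~\ref{lem4.1}: it attributes the result to \cite{li2} and explicitly omits the argument, remarking only that the proof parallels \cite{gao1,lions1,zhang2}. Your sketch follows exactly that intended Lions-type scheme (Br\'ezis--Lieb reduction, localized reverse-H\"older inequalities obtained from \eqref{e2.5} and the Hardy--Littlewood--Sobolev inequality, Lions's measure lemma, and far-field HLS/Sobolev estimates), the arithmetic with $S$, $S_H$, $C(N,\mu)$ checks out, and the two genuinely non-routine points you flag -- the fractional Leibniz remainder controlling $[\varphi v_n]_s$ and the off-diagonal Riesz-kernel interactions -- are precisely the ones addressed in the cited references.
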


Now, in order to show that all hypotheses of the Limit Index Theory
are satisfied for  \ref{the3.1}, we denote  $G_1 = O(N)$, where $O(N)$ is
the group of orthogonal linear transformations in $\mathbb{R}^N$, $E
= H_{r,A}^{s}(\mathbb{R}^N,\mathbb{C})$ and $$E_{G_1} =
H_{r,A,O(N)}^{s} := \{u\in H_{r,A}^{s}(\mathbb{R}^N,\mathbb{C}):
gu(x) = u(g^{-1}x) = u(x), g \in O(N)\}.$$   For convenience, let
$G_2 = \mathbb{Z}_2$, $Y = E \times E$, $X = Y_{G_1} = E_{G_1}\times
E_{G_1}$. The space $Y$ is endowed with the norm $\|(u, v)\|_{s,A} =
\|u\|_{s,A} + \|v\|_{s,A}$. Using d'Avenia and  Squassina \cite{da},
it is easy to prove that $(Y, \|\cdot\|_{s,A})$ is a reflexive
Banach space. The corresponding energy functional of problem
\eqref{e1.1} is given by
\begin{eqnarray}\label{e4.10}
\mathcal {J}(u,v) &=&\nonumber
 -\frac{1}{2}\mathscr{M}(\|u\|_{s,A}^2) + \frac{1}{2}\mathscr{M}(\|v\|_{s,A}^2) -\frac{1}{2{p^*}}\int_{\mathbb
R^{N}}\left(\mathcal{I}_\mu*|u|^{p^*}\right)|u|^{p^\ast}\,dx\\
&& -\frac{1}{2{p^*}}\int_{\mathbb
R^{N}}\left(\mathcal{I}_\mu*|v|^{p^*}\right)|v|^{p^\ast}\,dx -
\frac{1}{2}\int_{\mathbb{R}^N}G(|x|, |u|^2,|v|^2)dx
\end{eqnarray}
for each $(u,v)\in Y$. By condition ($\mathcal {F}$), we know that
the functional $\mathcal {J}$ is well defined on $Y$ and belongs to
$C^1(Y, \mathbb{R})$. Moreover,  its Fr\'{e}chet derivative is given
by
\begin{eqnarray*}
\langle \mathcal {J}'(u,v), (z_1,z_2)\rangle &=& \nonumber
-\mathfrak{M}(\|u\|_{s,A}^2)\left(\langle
u,z_1\rangle_{s,A}+\mathscr{R}\int_{\mathbb{R}^N}u\overline{z}_1dx\right)
+ \mathfrak{M}(\|v\|_{s,A}^2)\left(\langle
v,z_2\rangle_{s,A}+\mathscr{R}\int_{\mathbb{R}^N}v\overline{z}_2dx\right)  \\
&&\nonumber -\mathscr{R}\int_{\mathbb{R}^N}(\mathcal{I}_\mu*|u|^{p^*})|u|^{p^*-2}u\overline{z}_1 dx-\mathscr{R}\int_{\mathbb{R}^N}(\mathcal{I}_\mu*|v|^{p^*})|v|^{p^*-2}v\overline{z}_2 dx\\
&&   - \mathscr{R}\int_{\mathbb{R}^N}F_u(|x|,
|u|^2,|v|^2)u\overline{z}_1dx -
\mathscr{R}\int_{\mathbb{R}^N}F_v(|x|, |u|^2,|v|^2)v\overline{z}_2dx
= 0
\end{eqnarray*}
for any $(u,v), (z_1,z_2) \in Y$, where
$$\langle \zeta, z_i\rangle_{s,A} =
\mathscr{R}\iint_{\mathbb{R}^{2N}}\frac{(\zeta(x)-e^{i(x-y)\cdot
A(\frac{x+y}{2})}\zeta(y))\overline{(z_i(x)-e^{i(x-y)\cdot
A(\frac{x+y}{2})}z_i(y))}}{|x-y|^{N+2s}}dx dy$$ for any $\zeta, z_i
\in H_A^s(\mathbb{R}^N, \mathbb{C})$ $(i = 1,2)$.  By condition
($\mathcal {F}$),  we know that the functional $\mathcal {J}$ is
$O(N)$-invariant. Therefore, from  the principle of symmetric
criticality of Krawcewicz and Marzantowicz \cite{kr}, we know that
$(u, v)$ is a critical point of $\mathcal {J}$ if and only if
$(u,v)$ is a critical point of $J = \mathcal {J}|_{X= E_{G_1}\times
E_{G_1}}$.  So, we just need to prove 
 the existence of a
sequence of critical points $\mathcal {J}$ on $Y$.

Now, we begin to prove the $(PS)_c$ condition.
\begin{lemma}\label{lem4.2}
Let ($\mathcal {M}$) and ($\mathcal {G}$) hold,
$\{(u_{n_k},v_{n_k})\}$ be a sequence such that
$\{(u_{n_k},v_{n_k})\} \in X_{n_k}$,
\begin{displaymath}
J_{n_k}(u_{n_k},v_{n_k}) \rightarrow c < c^\ast\quad \mbox{as}\ k
\rightarrow \infty,
\end{displaymath}
where $J_{n_k} = \mathcal {J}|_{X_{n_k}}$ and
$$c^\ast = \min\left\{\left(\frac{1}{\theta}-\frac{1}{2{p^*}}\right)\left(\mathfrak{m}_1S_H^\sigma\right)^{\frac{p^\ast}{p^\ast-\sigma}},
\left(\frac{1}{\theta}-\frac{1}{2{p^*}}\right)\left(\mathfrak{m}_1\hat{C}(\mu,N)^{-1}S^{\frac{p^\ast}{2}}\right)^{\frac{2}{p^\ast-2\sigma}}\right\}.$$
Then  there exists a subsequence of $\{(u_{n_k},v_{n_k})\}$ strongly
convergent in $X$.
\end{lemma}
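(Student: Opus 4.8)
The plan is to run the usual five–step argument for $(PS)^\ast$ sequences of critical variational problems, with the two features forced by the present setting handled separately: the noncooperative structure of \eqref{e1.1} is dealt with by first showing that the $u$–component of the sequence vanishes, and the degeneracy of $\mathfrak{M}$ is dealt with at the very end by invoking $(M_1)$ on the limiting norm. Write $w_{n_k}=(u_{n_k},v_{n_k})$; by Definition~\ref{de2.5} we also have $dJ_{n_k}(w_{n_k})\to0$ in $X_{n_k}^\ast$. \emph{Step 1 (boundedness and $\|u_{n_k}\|_{s,A}\to0$).} Since $(u_{n_k},0)\in X_{n_k}$, I first test $dJ_{n_k}(w_{n_k})$ against it and discard the two nonnegative terms produced, namely the $u$–Choquard integral and the $G$–term (nonnegative by $(G_3)$), which gives $\mathfrak{M}(\|u_{n_k}\|_{s,A}^2)\|u_{n_k}\|_{s,A}^2\le o(1)\|u_{n_k}\|_{s,A}$; since $(M_3)$ yields $\mathfrak{M}(t)t\ge\mathfrak{m}_1t^\sigma$ and $\sigma>1$, this forces $\|u_{n_k}\|_{s,A}\to0$, hence $\mathscr{M}(\|u_{n_k}\|_{s,A}^2)\to0$ and, by Lemma~\ref{lem2.1} and \eqref{e2.3}, $\int(\mathcal{I}_\mu\ast|u_{n_k}|^{p^\ast})|u_{n_k}|^{p^\ast}\to0$. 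I then form the Ambrosetti--Rabinowitz combination $J_{n_k}(w_{n_k})-\tfrac1\theta\langle dJ_{n_k}(w_{n_k}),w_{n_k}\rangle$: by $(M_2)$ its $u$–Kirchhoff part is $O(\mathscr{M}(\|u_{n_k}\|_{s,A}^2))=o(1)$, by $(M_2)$--$(M_3)$ and $\theta>2\sigma$ its $v$–Kirchhoff part is $\ge(\tfrac12-\tfrac\sigma\theta)\tfrac{\mathfrak{m}_1}{\sigma}\|v_{n_k}\|_{s,A}^{2\sigma}$, the Choquard contribution carries the sign $\tfrac1\theta-\tfrac1{2p^\ast}>0$, and by $(G_2)$ the $G$–contribution is bounded below by $\tfrac12\int_{\mathbb{R}^N}G(|x|,|u_{n_k}|^2,|v_{n_k}|^2)\ge0$; comparing with $c+o(1)+o(1)\|v_{n_k}\|_{s,A}$ and using $\sigma>1$ gives $\{v_{n_k}\}$, hence $\{w_{n_k}\}$, bounded in $Y$.

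\emph{Step 2--3 (concentration–compactness and its exclusion --- the main obstacle).} Passing to a further subsequence, $w_{n_k}\rightharpoonup w=(0,v)$ in $Y$, with $v_{n_k}\to v$ a.e.\ and, by Lemma~\ref{lem2.3}, in $L^t(\mathbb{R}^N,\mathbb{C})$ for every $t\in(2,2_s^\ast)$, and $\|v_{n_k}\|_{s,A}^2\to b^2$. Since $|v_{n_k}|\in H_r^s(\mathbb{R}^N)$ with $\||v_{n_k}|\|_s\le\|v_{n_k}\|_{s,A}$ (Lemma~\ref{lem2.1}), Lemma~\ref{lem4.1} applies to $\{|v_{n_k}|\}$; radial symmetry confines the only possible atom to the origin, of mass $\nu_0$, and I also track the mass at infinity $\nu_\infty$. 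The heart of the proof is to show $\nu_0=\nu_\infty=0$. Testing $dJ_{n_k}(w_{n_k})$ against $(0,\psi_\varepsilon v_{n_k})$, $\psi_\varepsilon$ a cut–off at the origin, letting $n_k\to\infty$ (using $\|v_{n_k}\|_{s,A}^2\to b^2$ and continuity of $\mathfrak{M}$) and then $\varepsilon\to0$, the non–atomic ($L^2$ and $G$) terms drop out and one obtains the balance $\mathfrak{M}(b^2)\omega_0=\nu_0$; combining with $\omega_0\ge S_H\nu_0^{1/p^\ast}$ (\eqref{e4.3}), $b^2\ge\int d\omega+\omega_\infty\ge\omega_0$ (\eqref{e4.5}), and $\mathfrak{M}(b^2)\ge\mathfrak{m}_1b^{2(\sigma-1)}\ge\mathfrak{m}_1\omega_0^{\sigma-1}$ ($(M_3)$) produces the dichotomy $\nu_0=0$ or $\nu_0\ge(\mathfrak{m}_1S_H^\sigma)^{p^\ast/(p^\ast-\sigma)}$. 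Running the same computation with $1-\psi_R$ in place of $\psi_\varepsilon$ and using \eqref{e4.7}--\eqref{e4.9} gives $\nu_\infty=0$ or $\nu_\infty\ge(\mathfrak{m}_1\hat{C}(\mu,N)^{-1}S^{p^\ast/2})^{2/(p^\ast-2\sigma)}$. On the other hand, from Step~1, $c=\lim\big(J_{n_k}(w_{n_k})-\tfrac1\theta\langle dJ_{n_k}(w_{n_k}),w_{n_k}\rangle\big)\ge(\tfrac1\theta-\tfrac1{2p^\ast})(\nu_0+\nu_\infty)$ by \eqref{e4.1} and \eqref{e4.4}; hence $c<c^\ast$ excludes both lower alternatives and forces $\nu_0=\nu_\infty=0$.

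\emph{Step 4--5 (strong convergence).} With $\nu_0=\nu_\infty=0$, \eqref{e4.1} and \eqref{e4.4} give $\int(\mathcal{I}_\mu\ast|v_{n_k}|^{p^\ast})|v_{n_k}|^{p^\ast}\to\int(\mathcal{I}_\mu\ast|v|^{p^\ast})|v|^{p^\ast}$, so $|v_{n_k}|\to|v|$ in $L^{2_s^\ast}$, and together with Lemma~\ref{lem2.3} all nonlinear terms of $J$ and $dJ$ pass to the limit; a standard $(PS)^\ast$ density argument then yields $dJ(0,v)=0$. For the norm, with $P_{n_k}$ the orthogonal projection of $Y$ onto $X_{n_k}$ one has $\langle dJ_{n_k}(w_{n_k}),w_{n_k}-P_{n_k}w\rangle\to0$ and $w_{n_k}-P_{n_k}w\rightharpoonup0$, hence $\langle dJ(w_{n_k})-dJ(w),w_{n_k}-w\rangle\to0$; expanding, using $u_{n_k}\to0$ and the convergence of the nonlinear terms, this reduces to $\mathfrak{M}(\|v_{n_k}\|_{s,A}^2)(v_{n_k},v_{n_k}-v)_{s,A}\to0$, i.e.\ $\mathfrak{M}(\|v_{n_k}\|_{s,A}^2)\|v_{n_k}-v\|_{s,A}^2\to0$ since $(v_{n_k},v_{n_k}-v)_{s,A}=\|v_{n_k}-v\|_{s,A}^2+o(1)$. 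If $b^2>0$ then by $(M_1)$ and continuity $\mathfrak{M}(\|v_{n_k}\|_{s,A}^2)\to\mathfrak{M}(b^2)\ge\mathfrak{m}^\ast>0$, forcing $\|v_{n_k}-v\|_{s,A}\to0$; if $b^2=0$ then $v_{n_k}\to0=v$ trivially. In either case $w_{n_k}\to(0,v)$ strongly in $X$, which proves the lemma.

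\emph{Where the difficulty lies.} The essential obstacle is Step~3: because $\mathfrak{M}$ is degenerate there is no fixed ``$(PS)$ threshold'' to quote, and one must combine the convolution concentration–compactness principle (Lemma~\ref{lem4.1}) with the sharp lower bound $(M_3)$ to pin down the exact energy levels $(\mathfrak{m}_1S_H^\sigma)^{p^\ast/(p^\ast-\sigma)}$ (concentration at the origin) and $(\mathfrak{m}_1\hat{C}(\mu,N)^{-1}S^{p^\ast/2})^{2/(p^\ast-2\sigma)}$ (mass at infinity) below which compactness holds --- which is exactly what the hypothesis $c<c^\ast$ encodes. A second, genuinely degenerate, point is the final norm convergence, where the usual ``$\mathfrak{M}\ge d>0$'' device is unavailable and is replaced by the uniform positivity $(M_1)$ evaluated at the limiting value $b^2>0$ of the norms.
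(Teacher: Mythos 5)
Your proof follows essentially the same approach as the paper's: boundedness via the Ambrosetti--Rabinowitz combination and $(M_2)$--$(M_3)$, the convolution concentration--compactness principle (Lemma~\ref{lem4.1}) together with the energy threshold $c<c^\ast$ to rule out the atom and the mass at infinity, and positivity of $\mathfrak{M}$ at the limiting norm to close the argument. One small inaccuracy worth flagging: the ``balance'' you write as $\mathfrak{M}(b^2)\omega_0=\nu_0$ is really $\mathfrak{M}(b^2)\Omega_0=\nu_0$ where $\Omega_0$ is the concentration mass of the \emph{magnetic} seminorm and $\Omega_0\ge\omega_0$ only by the diamagnetic inequality (Lemma~\ref{lem2.1}); since you only use the consequence $\nu_0\ge\mathfrak{m}_1\omega_0^{\sigma}$ the conclusion is unaffected, and your explicit treatment of the degenerate case $b^2=0$ and use of $(M_3)$ rather than $(M_1)$ in Step 1 are in fact cleaner than the paper's handling of those points.
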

\begin{proof}[\bf Proof]
If $\inf_{n\in\mathbb N}\|u_{n_k}\|_{s,A}= 0$ or $\inf_{n\in\mathbb
N}\|v_{n_k}\|_{s,A}= 0$, then there exists a subsequence of
$\{u_{n_k}\}$ (or $\{v_{n_k}\}$)  such that $u_{n_k} \rightarrow 0$
or $v_{n_k} \rightarrow 0$ in $E_{G_1}$ as $n \rightarrow \infty$.
Thus, we can
 assume that  $\inf_{n\in\mathbb N}\|u_{n_k}\|_{s}=d_1>0$
and $\inf_{n\in\mathbb N}\|v_{n_k}\|_{s}=d_2>0$ in the
sequel.

On the one hand, from   $(M_1)$ and $(F_3)$, we get
\begin{eqnarray}\label{e4.11}
o(1)\|u_{n_k}\|_{s,A} &\geq& \nonumber \langle -dJ_{n_k}(u_{n_k}, v_{n_k}), (u_{n_k}, 0) \rangle\\
&=& \nonumber \mathfrak{M}(\|u_{n_k}\|_{s,A}^2)\|u_{n_k}\|_{s,A}^2
+ \int_{\mathbb
R^{N}}\left(\mathcal{I}_\mu*|u_{n_k}|^{p^*}\right)|u_{n_k}|^{p^\ast}\,dx\\
&& \nonumber  + \int_{\mathbb{R}^N}G_u(|x|,
|u_{n_k}|^2, |v_{n_k}|^2)|u_{n_k}|^2dx 
\geq \mathfrak{m}^\ast\|u_{n_k}\|_{s,A}^2.
\end{eqnarray}
Therefore,$\{u_{n_k}\}$ is bounded in $E_{G_1}$. On the other hand,
from $(F_2)$  and the fact that $2\sigma <\theta<2p^*$, we have
\begin{eqnarray}\label{e4.12}
c + o(1)\|v_{n_k}\|_{s,A} &=&\nonumber J_{n_k}(0, v_{n_k}) -  \frac{1}{\theta}\langle dJ_{n_k}(u_{n_k}, v_{n_k}), (0, v_{n_k}) \rangle \\
&=&\nonumber \frac{1}{2}\mathscr{M}(\|v_{n_k}\|_{s,A}^2)-
 \frac{1}{\theta}\mathfrak{M}(\|v_{n_k}\|_{s,A}^2)\|v_{n_k}\|_{s,A}^2 \\
 &&\nonumber
+\left(\frac{1}{\theta}-\frac{1}{2{p^*}}\right)\int_{\mathbb
R^{N}}\left(\mathcal{I}_\mu*|v_{n_k}|^{p^*}\right)|v_{n_k}|^{p^\ast}\,dx\\
&&\nonumber - \int_{\mathbb{R}^N} \left[G(|x|, 0,|v_{n_k}|^2) -
\frac{1}{\theta}G_v(|x|,
0,|v_{n_k}|^2)|v_{n_k}|^2\right]dx\\
&\geq&
\left(\frac{1}{2\sigma}-\frac{1}{\theta}\right)\mathfrak{M}(\|v_{n_k}\|_{s,A}^2)\|v_{n_k}\|_{s,A}^2
\geq \left(\frac{1}{2\sigma}-\frac{1}{\theta}\right)\mathfrak{m}^\ast\|v_{n_k}\|_{s,A}^2.
\end{eqnarray}
This fact  implies that  $\{v_{n_k}\}$ is bounded in $E_{G_1}$. Thus
$\|u_{n_k}\|_{s,A} + \|v_{n_k}\|_{s,A}$ is bounded in $X$.

\indent Now, we shall prove that  $\{(u_{n_k},v_{n_k})\}$
contains a subsequence strongly convergent in $X$.
Since $\{u_{n_k}\}$ is bounded in $E_{G_1}$, it follows that, up to a
subsequence, $u_{n_k} \rightharpoonup u_0$ weakly in $E_{G_1}$ and
$u_{n_k} \rightarrow u_0$, a.e. in $\mathbb{R}^N$. Thus, it follows
from $(M_1)$ and $(F_3)$ that
\begin{eqnarray*}
0 &\leftarrow & \langle -dJ_{n_k}(u_{n_k}-u_0, v_{n_k}), (u_{n_k}-u_0, 0)\rangle\\
&=& \mathfrak{M}(\|u_{n_k}-u_0\|_{s,A}^2)\|u_{n_k}-u_0\|_{s,A}^2 +
 \int_{\mathbb{R}^N}G_u(|x|,
|u_{n_k}-u_0|^2, |v_{n_k}|^2)|u_{n_k}-u_0|^2dx \\
&& \mbox{} + \int_{\mathbb
R^{N}}\left(\mathcal{I}_\mu*|u_{n_k}-u_0|^{p^*}\right)|u_{n_k}-u_0|^{p^\ast}\,dx
\geq  \mathfrak{m}^\ast\|u_{n_k}-u_0\|_{s,A}^2,
\end{eqnarray*}
which implies that
\begin{align}\label{e4.13}
u_{n_k} \rightarrow u_0 \quad \mbox{ strongly\ in\ } E_{G_1}.
\end{align}
It suffices to prove that there exists $v_0 \in E_{G_1}$ such that
\begin{align}\label{e4.14}
v_{n_k} \rightarrow v_0 \quad \mbox{ strongly\ in\ } E_{G_1}.
\end{align}

Next, in order to prove \eqref{e4.14}, we divide the following proof
into three claims.

\vspace{2mm}

{\bf Claim 1.} Fix $i \in I$. Then  either  $\nu_i = 0$
or
\begin{eqnarray}\label{e4.15}
\nu_i \geq
\left(\mathfrak{m}_1S_H^\sigma\right)^{\frac{p^\ast}{p^\ast-\sigma}}.
\end{eqnarray}

In order to prove \eqref{e4.15}, we take $\phi \in
C_0^\infty(\mathbb{R}^N)$ be a radial symmetric function satisfying
$0 \leq \phi \leq 1$; $\phi \equiv 1$ in $B(x_i, \epsilon)$,
$\phi(x) = 0$ in $\mathbb{R}^N \setminus B(x_i, 2\epsilon)$. For any
$\epsilon
> 0$, define $\phi_\epsilon :=
\phi\left(\frac{x-x_i}{\epsilon}\right)$, where $i \in I$. Clearly
$\{\phi_\epsilon v_{n_k}\}$ is bounded in $H_{r,A}^s(\mathbb{R}^N,
\mathbb{C})$ and $\langle dJ_{n_k}(u_{n_k},v_{n_k}), (0,
v_{n_k}\phi_\epsilon)\rangle \rightarrow 0$  as $n \rightarrow
\infty$. Hence
\begin{eqnarray}\label{e4.16}
&&\nonumber
\mathfrak{M}\left(\|v_{n_k}\|_{s,A}^2\right)\left(\iint_{\mathbb{R}^{2N}}\frac{|v_{n_k}(x)-e^{i(x-y)\cdot
A(\frac{x+y}{2})}v_{n_k}(y)|^2\phi_\epsilon(y)}{|x-y|^{N+2s}}dx dy +
\int_{\mathbb{R}^N} |v_{n_k}|^2\phi_\epsilon(x) dx\right)\\
&&\mbox{}\nonumber = - \mathscr{R}\left\{
\mathfrak{M}\left(\|v_{n_k}\|_{s,A}^2\right)\iint_{\mathbb{R}^{2N}}\frac{(v_{n_k}(x)-e^{i(x-y)\cdot
A(\frac{x+y}{2})}v_{n_k}(y))\overline{v_{n_k}(x)(\phi_\epsilon(x)-\phi_\epsilon(y))}}{|x-y|^{N+2s}}dx dy\right\}\\
&&\mbox{}\ \  + \int_{\mathbb
R^{N}}\left(\mathcal{I}_\mu*|v_{n_k}|^{p^*}\right)|v_{n_k}|^{p^\ast}\phi_\epsilon\,dx
 + \int_{\mathbb{R}^N}G_v(|x|,
|u_{n_k}|^2, |v_{n_k}|^2)|v_{n_k}|^2\phi_\epsilon dx +o_n(1).
\end{eqnarray}
We deduce from $(M_2)$ and diamagnetic inequality  that
\begin{eqnarray}\label{e4.17}
&&\nonumber
\mathfrak{M}\left(\|v_{n_k}\|_{s,A}^2\right)\left(\iint_{\mathbb{R}^{2N}}\frac{|v_{n_k}(x)-e^{i(x-y)\cdot
A(\frac{x+y}{2})}v_{n_k}(y)|^2\phi_\epsilon(y)}{|x-y|^{N+2s}}dx dy +
\int_{\mathbb{R}^N} |v_{n_k}|^2\phi_\epsilon(x) dx\right) \\
&&\nonumber \mbox{} \ \ \geq \mathfrak{m}_1
\left(\iint_{\mathbb{R}^{2N}}\frac{|v_{n_k}(x)-e^{i(x-y)\cdot
A(\frac{x+y}{2})}v_{n_k}(y)|^2\phi_\epsilon(y)}{|x-y|^{N+2s}}dx dy +
\int_{\mathbb{R}^N} |v_{n_k}|^2\phi_\epsilon(x) dx\right)^{\sigma}\\
&&\geq \mathfrak{m}_1
\left(\iint_{\mathbb{R}^{2N}}\frac{\left||v_{n_k}(x)|-|v_{n_k}(y)|\right|^2\phi_\epsilon(y)}{|x-y|^{N+2s}}dxdy\right)^{\sigma}.
\end{eqnarray}
We note that
\begin{eqnarray}\label{e4.18}
\lim_{\epsilon \rightarrow 0}\lim_{n \rightarrow
\infty}\iint_{\mathbb{R}^{2N}}\frac{\left||v_{n_k}(x)|-|v_{n_k}(y)|\right|^2\phi_\epsilon(y)}{|x-y|^{N+2s}}dxdy
= \lim_{\epsilon \rightarrow 0}\int_{\mathbb{R}^{N}}\phi_\epsilon
d\omega = \omega_i
\end{eqnarray}
and
\begin{eqnarray}\label{e4.19}
\lim_{\epsilon \rightarrow 0}\lim_{n \rightarrow \infty}
\int_{\mathbb
R^{N}}\left(\mathcal{I}_\mu*|v_{n_k}|^{p^*}\right)|v_{n_k}|^{p^\ast}\phi_\epsilon\,dx
= \lim_{\epsilon \rightarrow 0}\int_{\mathbb{R}^{N}}\phi_\epsilon
d\nu = \nu_i.
\end{eqnarray}
By the  H\"{o}lder inequality, we have
\begin{eqnarray}\label{e4.20}
&&\nonumber\left|\mathscr{R}\left\{
\mathfrak{M}\left(\|v_{n_k}\|_{s,A}^2\right)\int\int_{\mathbb{R}^{2N}}\frac{(v_{n_k}(x)-e^{i(x-y)\cdot
A(\frac{x+y}{2})}v_{n_k}(y))\overline{v_{n_k}(x)(\phi_\epsilon(x)-\phi_\epsilon(y))}}{|x-y|^{N+2s}}dx dy\right\}\right|\\
&&\nonumber \mbox{} \ \leq
C\iint_{\mathbb{R}^{2N}}\frac{|v_{n_k}(x)-e^{i(x-y)\cdot
A(\frac{x+y}{2})}v_{n_k}(y)|\cdot|\phi_\epsilon(x)-\phi_\epsilon(y)|\cdot|v_{n_k}(x)|}{|x-y|^{N+2s}}dxdy
\\
&& \mbox{} \  \leq C
\left(\iint_{\mathbb{R}^{2N}}\frac{|v_{n_k}(x)|^2|\phi_\epsilon(x)-\phi_\epsilon(y)|^2}{|x-y|^{N+2s}}dxdy\right)^{1/2}.
\end{eqnarray}
\indent  As the proof of  Zhang et al. \cite[Lemma 3.4]{zhang2}, we
get
\begin{eqnarray}\label{e4.21}
\lim_{\epsilon\rightarrow
 0}\lim_{n\rightarrow\infty}\iint_{\mathbb{R}^{2N}}\frac{|v_{n_k}(x)|^2|\phi_\epsilon(x)-\phi_\epsilon(y)|^2}{|x-y|^{N+2s}}dxdy
=0.
\end{eqnarray}
Furthermore, the Lebesgue dominated convergence theorem and $(F_1)$
imply that
\begin{eqnarray}\label{e4.22}
\int_{\mathbb{R}^{N}}G_v(|x|,
|u_{n_k}|^2,|v_{n_k}|^2)|v_{n_k}|^2\phi_{\epsilon}(x) \mathrm{d}x
\rightarrow
  \int_{\mathbb{R}^{N}}G_v(|x|,
|u_{0}|^2,|v_{0}|^2)|v_{0}|^2\phi_{\epsilon}(x) \mathrm{d}x \quad
\mbox{as}\ n\to \infty.
\end{eqnarray}
The definition of $\phi_{\epsilon}(x)$ gives us
\begin{eqnarray}\label{e4.23}
\left|\int_{\mathbb{R}^{N}}G_v(|x|,
|u_{0}|^2,|v_{0}|^2)|v_{0}|^2\phi_{\epsilon}\mathrm{d}x\right| \leq
\int_{B_\epsilon(0)}|G_v(|x|,
|u_{0}|^2,|v_{0}|^2)|v_{0}|^2\mathrm{d}x \rightarrow 0 \quad
\mbox{as}\ \epsilon \to 0.
\end{eqnarray}
Combining \eqref{e4.16}--\eqref{e4.21}, we get that $\nu_i \geq
\mathfrak{m}_1\omega_i^\sigma.$ It follows from \eqref{e4.3}  that
$\nu_i = 0$ or
\begin{eqnarray*}
\nu_i \geq
\left(\mathfrak{m}_1S_H^\sigma\right)^{\frac{p^\ast}{p^\ast-\sigma}}.
\end{eqnarray*}

{\bf Claim 2.}  $\nu_i = 0$, $ \ \hbox{for all} \ \ i \in I$ and
$\nu_\infty = 0$.\\

Indeed, if Claim 2 were false,  then there would exist a $i\in I$ such
that \eqref{e4.15} would hold.  Similar to \eqref{e4.12}, by $(M_3)$ and
$(F_2)$, we deduce
\begin{align}\label{e4.24}
  c &=\nonumber\lim_{\epsilon\rightarrow 0}  \lim_{ n \rightarrow
\infty} \left(J_{n_k}(0, v_{n_k}) -  \frac{1}{\theta}\langle
dJ_{n_k}(u_{n_k}, v_{n_k}), (0, v_{n_k}) \rangle\right)\\
&\geq\nonumber
\left(\frac{1}{\theta}-\frac{1}{2{p^*}}\right)\int_{\mathbb
R^{N}}\left(\mathcal{I}_\mu*|v_{n_k}|^{p^*}\right)|v_{n_k}|^{p^\ast}\,dx\\
&\geq\nonumber
\left(\frac{1}{\theta}-\frac{1}{2{p^*}}\right)\int_{\mathbb
R^{N}}\left(\mathcal{I}_\mu*|v_{n_k}|^{p^*}\right)|v_{n_k}|^{p^\ast}\phi_\epsilon\,dx
\\
&\geq\left(\frac{1}{\theta}-\frac{1}{2{p^*}}\right)\nu_i \geq
\left(\frac{1}{\theta}-\frac{1}{2{p^*}}\right)\left(\mathfrak{m}_1S_H^\sigma\right)^{\frac{p^\ast}{p^\ast-\sigma}}.
\end{align}

On the other hand, we show that $\nu_\infty = 0$. For this, we take
a cut off function $\phi_R \in C^\infty(\mathbb{R}^N)$ such that
\begin{equation*}
\phi_R(x) = \begin{cases}
    0 &\quad |x| < R,\\
    1 &\quad |x| > R+1
    \end{cases}
\end{equation*}
and $|\nabla\phi_R| \leq 2/R$. By using the Hardy-Littlewood-Sobolev
and H\"older's inequality, we get
\begin{align}\label{e4.25}
\nu_\infty &=\nonumber   \lim_{R\rightarrow \infty}  \lim_{ n
\rightarrow
\infty}\displaystyle\int_{\mathbb{R}^N}\left(\mathcal{I}_\mu*|v_{n_k}|^{p^*}\right)|v_{n_k}|^{p^\ast}\phi_R(y)dx  \\
&
\leq C_\mu(N)\lim_{R\rightarrow \infty}  \lim_{ n
\rightarrow
\infty}|v_{n_k}|_{2_s^\ast}^{p^\ast}\left(\int_{\mathbb{R}^N}|v_{n_k}(x)|^{2_s^*}\phi_R(y)dx\right)^{\frac{p^\ast}{2_s^\ast}}
\leq
\hat{C}(\mu,N)\xi_\infty^{\frac{p^\ast}{2_s^\ast}}.\end{align}

Note that $\{\phi_R v_{n_k}\}$ is bounded in
$H_{r,A}^s(\mathbb{R}^N, \mathbb{C})$. Hence, $\langle
dJ_{n_k}(u_{n_k},v_{n_k}), (0, v_{n_k}\phi_R)\rangle \rightarrow 0$
as $n \rightarrow \infty$, which yields that
\begin{eqnarray}\label{e4.26}
&&\nonumber
\mathfrak{M}\left(\|v_{n_k}\|_{s,A}^2\right)\left(\iint_{\mathbb{R}^{2N}}\frac{|v_{n_k}(x)-e^{i(x-y)\cdot
A(\frac{x+y}{2})}v_{n_k}(y)|^2\phi_R(y)}{|x-y|^{N+2s}}dx dy +
\int_{\mathbb{R}^N} |v_{n_k}|^2\phi_R(x) dx\right)\\
&&\mbox{}\nonumber = - \mathscr{R}\left\{
\mathfrak{M}\left(\|v_{n_k}\|_{s,A}^2\right)\iint_{\mathbb{R}^{2N}}\frac{(v_{n_k}(x)-e^{i(x-y)\cdot
A(\frac{x+y}{2})}v_{n_k}(y))\overline{v_{n_k}(x)(\phi_R(x)-\phi_R(y))}}{|x-y|^{N+2s}}dx dy\right\}\\
&&\mbox{}\ \  + \int_{\mathbb
R^{N}}\left(\mathcal{I}_\mu*|v_{n_k}|^{p^*}\right)|v_{n_k}|^{p^\ast}\phi_R\,dx
+
  \int_{\mathbb{R}^N}G_v(|x|,
|u_{n_k}|^2, |v_{n_k}|^2)|v_{n_k}|^2\phi_R dx +o_n(1).
\end{eqnarray}
We can easily deduce that
\begin{eqnarray*}
\limsup\limits_{R\rightarrow\infty}\limsup\limits_{n\rightarrow\infty}\iint_{\mathbb{R}^{2N}}\frac{||v_{n_k}(x)|-|v_{n_k}(y)||^2\phi_R(y)}{|x-y|^{N+2s}}dxdy
= \omega_\infty
\end{eqnarray*}
and
\begin{eqnarray*}
&& \left|\mathscr{R}\left\{
\mathfrak{M}\left(\|v_{n_k}\|_{s,A}^2\right)\iint_{\mathbb{R}^{2N}}\frac{(v_{n_k}(x)-e^{i(x-y)\cdot
A(\frac{x+y}{2})}v_{n_k}(y))\overline{v_{n_k}(x)(\phi_R(x)-\phi_R(y))}}{|x-y|^{N+2s}}dx
dy\right\}\right| \\
&& \mbox{}  \leq
C\left(\iint_{\mathbb{R}^{2N}}\frac{|v_{n_k}(x)|^2|\phi_R(x)-\phi_R(y)|^2}{|x-y|^{N+2s}}dxdy\right)^{1/2}.
\end{eqnarray*}
Furthermore,
\begin{eqnarray*}
&& \limsup\limits_{R \rightarrow \infty}\limsup\limits_{n
\rightarrow \infty}
\iint_{\mathbb{R}^{2N}}\frac{|v_{n_k}(x)|^2|\phi_R(x)-\phi_R(y)|^2}{|x-y|^{N+2s}}dxdy
\\
&& \mbox{} = \limsup\limits_{R \rightarrow \infty}\limsup\limits_{n
\rightarrow \infty}
\iint_{\mathbb{R}^{2N}}\frac{|v_{n_k}(x)|^2|(1-\phi_R(x))-(1-\phi_R(y))|^2}{|x-y|^{N+2s}}dxdy.
\end{eqnarray*}
On the other hand, similar to the proof of  Zhang et al.
\cite[Lemma 3.4 ]{zhang2}, we  have
\begin{eqnarray*}
\limsup\limits_{R \rightarrow \infty}\limsup\limits_{n \rightarrow
\infty}
\iint_{\mathbb{R}^{2N}}\frac{|v_{n_k}(x)|^2|(1-\phi_R(x))-(1-\phi_R(y))|^2}{|x-y|^{N+2s}}dxdy
= 0.
\end{eqnarray*}
It follows from $(M_2)$ that
\begin{eqnarray*}
&&
\mathfrak{M}\left(\|v_{n_k}\|_{s,A}^2\right)\left(\iint_{\mathbb{R}^{2N}}\frac{|v_{n_k}(x)-e^{i(x-y)\cdot
A(\frac{x+y}{2})}v_{n_k}(y)|^2\phi_R(y)}{|x-y|^{N+2s}}dx dy +
\int_{\mathbb{R}^N} |v_{n_k}|^2\phi_R(x) dx\right)\\
&& \mbox{} \ \ \geq  \mathfrak{m}_1
\left(\iint_{\mathbb{R}^{2N}}\frac{|v_{n_k}(x)-e^{i(x-y)\cdot
A(\frac{x+y}{2})}v_{n_k}(y)|^2\phi_R(y)}{|x-y|^{N+2s}}dx dy +
\int_{\mathbb{R}^N} |v_{n_k}|^2\phi_R(x) dx\right)^{\sigma}\\
&& \mbox{} \ \ \geq \mathfrak{m}_1
\left(\iint_{\mathbb{R}^{2N}}\frac{\left||u_n(x)|-|u_n(y)|\right|^2\phi_R(y)}{|x-y|^{N+2s}}dxdy\right)^{\sigma}
= \mathfrak{m}_1 \omega_\infty^{\sigma}.
\end{eqnarray*}
By the definition of $\phi_R$ and conditions $(F_1)$--$(F_2)$, we
have
\begin{eqnarray*}
\lim_{R \rightarrow \infty}\lim_{n \rightarrow
\infty}\int_{\mathbb{R}^N}F_v(|x|, |u_{n_k}|^2,
|v_{n_k}|^2)|v_{n_k}|^2\phi_R dx = \lim_{R \rightarrow
\infty}\int_{\mathbb{R}^N}F_v(|x|, |u_0|^2, |v_0|^2)|v_0|^2\phi_R dx
= 0.
\end{eqnarray*}
Therefore, by \eqref{e4.26} together with \eqref{e4.25}, we can
obtain that
$$\hat{C}(\mu,N)\xi_\infty^{\frac{p^\ast}{2_s^\ast}} \geq  \nu_\infty \geq \mathfrak{m}_1\omega_\infty^\sigma.$$ It follows from \eqref{e4.3}  that $\nu_\infty = 0$ or
\begin{eqnarray*}
\nu_\infty \geq
\left(\mathfrak{m}_1\hat{C}(\mu,N)^{-1}S^{\frac{p^\ast}{2}}\right)^{\frac{2}{p^\ast-2\sigma}}.
\end{eqnarray*}
Then we have
\begin{align}\label{e4.27}
  c &=\nonumber\lim_{R\rightarrow \infty}  \lim_{ n \rightarrow
\infty} \left(J_{n_k}(0, v_{n_k}) -  \frac{1}{\theta}\langle
dJ_{n_k}(u_{n_k}, v_{n_k}), (0, v_{n_k}) \rangle\right)\\
&\geq\nonumber
\left(\frac{1}{\theta}-\frac{1}{2{p^*}}\right)\int_{\mathbb
R^{N}}\left(\mathcal{I}_\mu*|v_{n_k}|^{p^*}\right)|v_{n_k}|^{p^\ast}\phi_R\,dx
\\
&\geq\left(\frac{1}{\theta}-\frac{1}{2{p^*}}\right)\nu_\infty \geq
\left(\frac{1}{\theta}-\frac{1}{2{p^*}}\right)\left(\mathfrak{m}_1\hat{C}(\mu,N)^{-1}S^{\frac{p^\ast}{2}}\right)^{\frac{2}{p^\ast-2\sigma}}.
\end{align}
Invoking the arguments above and \eqref{e4.24},
\eqref{e4.27}, set
$$c^\ast = \min\left\{\left(\frac{1}{\theta}-\frac{1}{2{p^*}}\right)\left(\mathfrak{m}_1S_H^\sigma\right)^{\frac{p^\ast}{p^\ast-\sigma}},
\left(\frac{1}{\theta}-\frac{1}{2{p^*}}\right)\left(\mathfrak{m}_1\hat{C}(\mu,N)^{-1}S^{\frac{p^\ast}{2}}\right)^{\frac{2}{p^\ast-2\sigma}}\right\}.$$
Then for any $c < c^\ast$ we have
$\nu_i =0$ for all $i \in I$ and $ \nu_\infty = 0.$\\

{\bf Claim 3.}  $v_{n_k} \rightarrow v_0$ strongly in $E_{G_1}$. \\

Indeed, by Claim 2 and Lemma 3.1, we know
\begin{eqnarray}\label{e4.28}
\lim_{n \rightarrow \infty} \int_{\mathbb
R^{N}}\left(\mathcal{I}_\mu*|v_{n_k}|^{p^*}\right)|v_{n_k}|^{p^\ast}\,dx
&=\displaystyle\lim_{n \rightarrow \infty}\int_{\mathbb
R^{N}}\left(\mathcal{I}_\mu*|v_0|^{p^*}\right)|v_0|^{p^\ast}\,dx.
\end{eqnarray}

Now, we define the linear functional $\mathcal {L}(v)$ on $E_{G_1}$
as follows
\begin{align*}
[\mathcal {L}(v),w]
&=\mathscr{R}\iint_{\mathbb{R}^{2N}}\frac{(v(x)-e^{i(x-y)\cdot
A(\frac{x+y}{2})}v(y))\overline{(w(x)-e^{i(x-y)\cdot
A(\frac{x+y}{2})}w(y))}}{|x-y|^{N+2s}}dx dy + \mathscr{R}\int_{\mathbb{R}^N}v\overline{w}dx\\
&=\langle
v,w\rangle_{s,A}+\mathscr{R}\int_{\mathbb{R}^N}v\overline{w}dx
\end{align*}
for any $v\in E_{G_1}$.  Obviously, $\mathcal {L}$ is a bounded
bi-linear operator, By the H\"{o}lder inequality, we have
\begin{align*}
\left|[\mathcal {L}(v),w]\right| \leq \|v\|_{s,A}\|w\|_{s,A}.
\end{align*}
Since $v_{n_k}\rightharpoonup v_0$ weakly in $E_{G_1},$ we
have
\begin{align}\label{e4.29}
\lim_{n\rightarrow\infty}[\mathcal {L}(v_0), v_{n_k}-v_0]=0.
\end{align}
Clearly, $[\mathcal {L}(v_{n_k}), v_{n_k}-v_0] \rightarrow 0$ as $n
\rightarrow \infty$. Hence, by \eqref{e4.29}, one has
\begin{align}\label{e4.30}
\lim_{n\rightarrow\infty}[\mathcal {L}(v_{n_k})-\mathcal {L}(v_{0}),
v_{n_k}-v_{0}] =0.
\end{align}
Hence
 by \eqref{e4.29} and \eqref{e4.30}, one has
\begin{align}\label{e4.31}
o(1)&=\langle dJ_{n_k}(u_{n_k}, v_{n_k})-dJ_{n_k}(u_0, v_0),(0,
v_{n_k})-(0, v_0)\rangle
\nonumber\\
&=\mathfrak{M}(\|v_{n_k}\|_{s,A}^2)\|v_{n_k}\|_{s,A}^2-\mathfrak{M}(\|v_{n_k}\|_{s,A}^2)[L(v_{n_k}),v_0]-\mathfrak{M}(\|v_0\|_{s,A}^2)[\mathcal {L}(v_0), v_{n_k}-v_0]\nonumber\\
&\quad -\lambda\int_{\mathbb{R}^N}[G_v(|x|, u_{n_k},
v_{n_k})-G_v(|x|,
u_0, v_0)](v_{n_k}-v_0)dx\nonumber\\
&\quad
-\int_{\mathbb{R}^N}\left[(\mathcal{I}_\mu*|v_{n_k}|^{p^*})|v_{n_k}|^{p^*-2}v_{n_k}
-(\mathcal{I}_\mu*|v_0|^{p^*})|v_0|^{p^*-2}v_0\right]
(v_{n_k}-v_0)dx\\
&=\mathfrak{M}(\|v_{n_k}\|_{s,A}^2)[\mathcal {L}(v_{n_k})-\mathcal
{L}(v_0), v_{n_k}-v_0]
-\int_{\mathbb{R}^N}(\mathcal{I}_\mu*|v_{n_k}-v_0|^{p^*})
|v_{n_k}-v_0|^{p^*}dx+o(1).\nonumber
\end{align}
This fact  together with $\|v_{n_k}\|_{s,A}  \rightarrow \beta$
implies that
\begin{align}\label{e4.32}
\mathfrak{M}(\beta^2)\lim_{n\rightarrow\infty}\|v_{n_k} -
v_0\|_{s,A}^2 = 0.
\end{align}
It follows from $(M_1)$ that  $v_{n_k} \rightarrow v_0$ strongly in
$E_{G_1}$.

To sum up,  we know that $\{(u_{n_k},v_{n_k})\}$ contains a
subsequence converging strongly in $X$ and the proof of Lemma
\ref{lem4.2} is complete.
\end{proof}

\section{Proof of Theorem 1.1}\label{s5}
In this section, we 
 prove that  problem \eqref{e1.1} has
infinitely many solutions.

\vspace{2mm} \noindent{\bf Proof of Theorem \ref{the1.1}} In order
to apply Theorem \ref{the3.1}, we define
\begin{displaymath}
Y = U \oplus V,\quad U = E_{G_1} \times \{0\}, \quad V = \{0\}
\times E_{G_1},
\end{displaymath}
\begin{displaymath}
Y_0 = \{0\} \times E_{G_1}^{m^\perp},\quad Y_1 = \{0\} \times
E_{G_1}^{(k)},
\end{displaymath}
where $m$ and $k$ are
yet
 to be determined. Obviously, $Y_0, Y_1$ are
$G$-invariant and
$$\mbox{codim}_V Y_0 = m, \quad \dim Y_1  =k.$$
It's easy to verify that  $(D_1)$, $(D_2)$, $(D_4)$ in Theorem
\ref{the3.1} are satisfied. Let
$$V_j = E_{G_1}^{(j)} =
\mbox{span}\{e_1,e_2,\cdots,e_j\}.$$ Hence $(D_3)$ in
Theorem~\ref{the3.1} holds.  In order to  verify the conditions
in $(D_7)$ in Theorem~\ref{the3.1}, note that $\mathrm{Fix}(G) \cap
V = 0$, thus
 $(a)$ of $(D_7)$ in Theorem~\ref{the3.1} is satisfied.
Now, we verify that $(b)$ and $(c)$ of $(D_7)$ in
Theorem~\ref{the3.1} holds.

\indent (i) Let $(0, v) \in Y_0\cap S_{\rho_m}$, then  from $(F_1)$ and
$(F_3)$, we get
\begin{eqnarray}\label{e5.1}
J(0, v) &=&\nonumber \frac{1}{2}\mathscr{M}(\|v\|_{s,A}^2)
-\frac{1}{2{p^*}}\int_{\mathbb
R^{N}}\left(\mathcal{I}_\mu*|v|^{p^*}\right)|v|^{p^\ast}\,dx -
\frac{1}{2}\int_{\mathbb{R}^N}G(|x|, 0,|v|^2)dx\\
&\geq& \frac{\mathfrak{m}^\ast}{\sigma}\|v\|_{s,A}^2     -
\frac{S_H^{-p^\ast}}{p_s^\ast}\|v\|_{s,A}^{2p^\ast} -
c\|v\|_{s,A}^{p}.
\end{eqnarray}
Therefore  we can choose $\rho_m > 0$ such that $J(0, v) \geq \alpha$
for $\|v\|_{s,A} = \rho_m$ since $2< p < 2p^\ast$. This fact implies
that $(b)$ of $(D_7)$ in Theorem~\ref{the3.1} holds.

\indent (ii) From $(F_1)$ we have
\begin{eqnarray*}
J(u, 0) &=&\nonumber -\frac{1}{2}\mathscr{M}(\|u\|_{s,A}^2)-
\frac{1}{2{p^*}}\int_{\mathbb
R^{N}}\left(\mathcal{I}_\mu*|u|^{p^*}\right)|u|^{p^\ast}\,dx -
\int_{\mathbb{R}^N}G(|x|, |u|^2, 0)dx\\
&\leq& 0.
\end{eqnarray*}
On the other hand,  we can take $\alpha$ such that
\begin{eqnarray*}
\alpha > \sup_{u \in E_{G_1}}J(u, 0).
\end{eqnarray*}
Let $(u, v) \in U \oplus Y_1$, then  we have
\begin{eqnarray*}
J(u, v) &=&\nonumber -\frac{1}{2}\mathscr{M}(\|u\|_{s,A}^2) +
\frac{1}{2}\mathscr{M}(\|v\|_{s,A}^2) -\frac{1}{2{p^*}}\int_{\mathbb
R^{N}}\left(\mathcal{I}_\mu*|u|^{p^*}\right)|u|^{p^\ast}\,dx\\
&& -\frac{1}{2{p^*}}\int_{\mathbb
R^{N}}\left(\mathcal{I}_\mu*|v|^{p^*}\right)|v|^{p^\ast}\,dx -
\frac{1}{2}\int_{\mathbb{R}^N}G(|x|, |u|^2,|v|^2)dx\\
&\leq&\nonumber \frac{c}{2}\|v\|_{s,A}^2 -
\frac{c}{2p^\ast}\|u\|_{s,A}^{2p^\ast} + \alpha.
\end{eqnarray*}
Note that all norms are equivalent on the finite-dimensional space
$Y_1$, so we can choose $k > m$ and $\beta_k
> \alpha_m$ such that
\begin{align*}
J_{U \oplus Y_1} \leq \beta_k. \end{align*} Hence $(c)$ of $(D_7)$
in Theorem~\ref{the3.1} holds. By Lemma \ref{lem4.2}, $J(u, v)$
satisfies the condition of $(PS)_c^\ast$ for any $c \in [\alpha_m,
\beta_k]$. Therefore
 $(D_6)$ in Theorem \ref{the3.1} holds. Hence by
 Theorem \ref{the3.1}, we know that
\begin{displaymath}
c_j = \inf_{i^\infty(A)\geq j}\sup_{z \in A}J(u,v),\quad -k+1 \leq j
\leq -m,\quad \alpha_m \leq c_j \leq \beta_k,
\end{displaymath}
are critical values of $J$. Letting $m \rightarrow \infty$, we can
obtain an unbounded sequence of critical values $c_j$. Since the
functional $J$ is even, we can get two critical points $(\pm u_j,
\pm
v_j)$ of $J$ corresponding to $c_j$.  \ \ \ $\Box$\\
 
\section*{Acknowledgements}
Shi  was supported by NSFC grant (No. 11771177), China Automobile
Industry Innovation and Development Joint Fund (No. U1664257),
Program for Changbaishan Scholars of Jilin Province and Program for
JLU Science, Technology Innovative Research Team (No. 2017TD-20). 
Repov\v{s} was supported by the Slovenian Research Agency grants (Nos.
P1-0292, N1-0114, N1-0083).


\begin{thebibliography}{99}

\bibitem{am}   V. Ambrosio,  Zero mass case for a fractional Berestycki-Lions-type
problem, Adv. Nonlinear Anal. 7 (2018) 365--374.

\bibitem{benci} {V. Benci, On critical point theory for indefinite functionals in
presence of symmetries, Trans. Amer. Math. Soc. 274 (1982)
533--572.}

\bibitem{cas} D. Cassani, J. Zhang,   Choquard-type equations with
Hardy-Littlewood-Sobolev upper critical growth, Adv. Nonlinear Anal.
8 (2019) 1184--1212.

\bibitem{da1}  P. d'Avenia, G. Siciliano, M. Squassina, On fractional Choquard equations, Math. Models Methods Appl. Sci., 25(8) (2014)  1447--1476.

\bibitem{da}  P. d'Avenia, M. Squassina, Ground states for fractional
magnetic operators, ESAIM: COCV 24 (2018)
1--24.

\bibitem {EGE}  E. Di Nezza, G. Palatucci, E. Valdinoci, Hitchhike's guide to the fractional Sobolev spaces, Bull. Sci. Math.  136
(2012) 521--573.

\bibitem{du} F. D\"{u}zg\"{u}n, Fatma Gamze, A. Iannizzotto,  Three nontrivial
solutions for nonlinear fractional Laplacian equations, Adv.
Nonlinear Anal. 7 (2018) 211--226.

\bibitem{fa} Y. Fang, J. Zhang, Multiplicity of solutions for a class of elliptic systems with
critical Sobolev exponent,  Nonlinear Anal. 73 (2010) 2767--2778.

\bibitem{fis2}  A. Fiscella,  P. Pucci,   B. Zhang, $p$-fractional
Hardy-Schr\"{o}dinger-Kirchhoff systems with critical
nonlinearities, Adv. Nonlinear Anal. 8 (2019)  1111--1131.

\bibitem{fu} M.F. Furtado, L.D. de Oliveira, J.P. da Silva, Multiple solutions
for a critical Kirchhoff system, Appl. Math. Lett. 91 (2019)
97--105.

\bibitem{gao1}  F. Gao, E.D. da Silva, M. Yang, J. Zhou, Existence of solutions for
critical Choquard equations via the concentration compactness
method, Proc. Roy. Soc. Edinburgh Sect. A, doi:10.1017/prm.2018.131.

\bibitem{hu} {D. Huang, Y. Li,  Multiplicity of solutions for a noncooperative
$p$-Laplacian elliptic system in $\mathbb{R}^N$,  J. Differential
Equations 215 (2005) 206--223.}

\bibitem{j1} {C. Ji, V. R\u adulescu, Multiplicity and
concentration of solutions to the nonlinear magnetic Schr\"{o}dinger
equation, Calc. Var. Partial Differential Equations,  59 (2020)  28
pp.}

\bibitem{j2}
{C. Ji, V. R\u adulescu, Multi-bump solutions for the nonlinear
magnetic Schr\"{o}dinger equation with exponential critical growth
in $\mathbb{R}^2$, Manuscripta Math. 164 (2021) 509--542.}


\bibitem{kr}  {W. Krawcewicz, W. Marzantowicz, Some remarks on the Lusternik-Schnirelman
method for non-differentiable functionals invariant with respect to
a finite group action,  Rocky Mountain J. Math. 20 (1990)
1041--1049. }

\bibitem{la} N. Laskin, Fractional quantum mechanics and L\'{e}vy path integrals,  Phys. Lett. A  268 (2000) 298--305.

\bibitem{to} L. Ledesma, E. C\'{e}sar,  Multiplicity result for
non-homogeneous fractional Schr\"{o}dinger-Kirchhoff-type equations
in $\mathbb{R}^N$, Adv. Nonlinear Anal. 7 (2018)  247--257.

\bibitem{Li1} {Y. Li, A limit index theory and its application,  Nonlinear Anal. 25 (1995) 1371--1389. }


\bibitem{li2} { X. Li, S. Ma, G. Zhang, Solutions to upper critical fractional
Choquard equations with potential, Adv. Differential Equations
25(2020) 135--160.}

\bibitem{liang0} S. Liang, G. Molica Bisci, B. Zhang, Multiple solutions for  a noncooperative Kirchhoff-type
system involving the fractional $p$-Laplacian and critical exponents, Math. Nachr. 291 (2018) 1553--1546.



\bibitem{liang3} {S. Liang, D.D. Repov\v{s}, B. Zhang,  On the fractional Schr\"{o}dinger--Kirchhoff equations with electromagnetic
fields and critical nonlinearity, Comput. Math. Appl.  75 (2018) 1778--1794.}

\bibitem{liang4} S. Liang,  D.D. Repov\v{s}, B. Zhang, Fractional magnetic Schr\"{o}dinger-Kirchhoff problems with convolution and critical
nonlinearities, Math. Models Methods Appl. Sci. 43 (2020)
2473--2490.

\bibitem{liang1} S. Liang, S. Shi, Multiplicity of solutions for the
noncooperative $p(x)$-Laplacian operator elliptic system involving
the critical growth, J. Dyn. Control Syst. 18 (2012) 379--396.

\bibitem{liang2}  S. Liang, J. Zhang, Multiple solutions for noncooperative
$p(x)$-Laplacian equations in $\mathbb{R}^N$ involving the critical
exponent, J. Math. Anal. Appl. 403 (2013) 344--356.

\bibitem{lie} E.H. Lieb, M. Loss, Analysis, volume 14 of graduate studies in
mathematics, American Mathematical Society, Providence, RI, (4)
2001.

\bibitem{lions} P. L. Lions, Sym\'{e}trie et compacit\'{e} dans les \'{e}spaces de
Sobolev, J. Funct. Anal. 49 (1982) 315--334.

\bibitem{lions1} {P.L. Lions, The concentration compactness principle in the calculus
of variations. The locally compact case. Part I and II, Ann. Inst.
H. Poincar\'{e} Anal. Non. Lineaire. 1 (1984). pp. 109--145 and
223--283.}

\bibitem{liu} {J. Liu, C. Ji, Concentration results for a magnetic
Schr\"{o}dinger-Poisson system with critical growth,  Adv. Nonlinear
Anal. 10 (2021) 775--798.}

 \bibitem{ma} P. Ma, J. Zhang, Existence and multiplicity of solutions for
fractional Choquard equations, Nonlinear Anal.  164 (2017) 100--117.

\bibitem{MPSZ}  X. Mingqi, P. Pucci, M. Squassina, B. Zhang, Nonlocal Schr$\ddot{\mbox{o}}$dinger--Kirchhoff equations with external magnetic field,
Discrete Contin. Dyn. Syst. 37 (2017) 503--521.

\bibitem{x2}
X. Mingqi, V. R\u adulescu, B. Zhang, A critical fractional
Choquard-Kirchhoff problem with magnetic field, Commun. Contemp.
Math. 21 (2019) 1850004, 36 pp.

\bibitem{x3} X. Mingqi, V. R\u adulescu, B. Zhang, Fractional Kirchhoff
problems with critical Trudinger-Moser nonlinearity, Calc. Var.
Partial Differential Equations 58 (2019) 27 pp.


\bibitem{MRS} G. Molica Bisci, V. R\u adulescu, R. Servadei, Variational methods
for nonlocal fractional equations, Encyclopedia of Mathematics and
its Applications,  162, Cambridge University Press, Cambridge, 2016.

\bibitem{mu1} T. Mukherjee, K. Sreenadh, Fractional Choquard equation with
critical nonlinearities, Nonlinear Differential Equations and
Applications, NoDEA  24(6) (2017) 63.

\bibitem{pa} N. Papageorgiou, V. R\u adulescu, D. Repov\v{s}, Nonlinear
Analysis - Theory and Methods, Springer, Cham, (2019).

\bibitem{pek} S. Pekar, Untersuchung \"{u}ber die Elektronentheorie der Kristalle,
Akademie Verlag, Berlin, 1954.

\bibitem{pu} P. Pucci, M. Xiang, B. Zhang, Existence results for
Schr\"{o}dinger-Choquard-Kirchhoff equations involving the
fractional $p$-Laplacian, Adv. Calc. Vari., 12 (2019) 253--275.

\bibitem{so1} Y. Song,  S. Shi,  Infinitely many solutions for Kirchhoff equations
with Hardy-Littlewood-Sobolev critical nonlinearity, Rev. R. Acad.
Cienc. Exactas F\'{\i}s. Nat. Ser. A Mat. RACSAM  113 (2019)
3223--3232.

\bibitem{so2} Y. Song,  S. Shi,  Existence and multiplicity of solutions for
Kirchhoff equations with Hardy-Littlewood-Sobolev critical
nonlinearity,  Appl. Math. Lett. 92 (2019) 170--175.

\bibitem{so3} Y. Song,  S. Shi, Multiple solutions for a class of noncooperative critical nonlocal
system with variable exponents,  Math. Models Methods Appl. Sci. 44
(2021) 6630--6646.

\bibitem{sq1}  M. Squassina, B. Volzone, Bourgain-Br\'{e}zis-Mironescu formula for
magnetic operators,  C. R. Math. 354 (2016) 825--831.

\bibitem{WX} F. Wang, M. Xiang, Multiplicity of solutions to a nonlocal Choquard equation
involving fractional magnetic operators and critical exponent,
Electron. J. Diff. Equ. 2016 (2016) 1--11.

\bibitem{w1} M. Willem,  Minimax Theorems, Progress in Nonlinear Differential Equations and their Applications, Vol. 24. Birkh\"{a}ser: Boston/Basel/Berlin, 1996.

\bibitem{xia} { A.  Xia,  Multiplicity and concentration results for magnetic
relativistic Schr\"{o}dinger equations,  Adv. Nonlinear Anal. 9
(2020)  1161--1186.}

\bibitem{xiang}  M. Xiang, V. R\u adulescu, B. Zhang,   Combined effects for fractional Schr\"{o}dinger-Kirchhoff systems with
critical nonlinearities,   ESAIM: COCV  24  (2018) 1249--1273.

\bibitem{x1} M. Xiang, B. Zhang, V. R\u adulescu,  Superlinear
Schr\"{o}dinger-Kirchhoff type problems involving the fractional
$p$-Laplacian and critical exponent, Adv. Nonlinear Anal. 9 (2020)
690--709.

\bibitem{zhang2} {X. Zhang, B. Zhang, D.D. Repov\v{s}, Existence and symmetry of
solutions for critical fractional Schr$\ddot{\mbox{o}}$dinger
equations with bounded potentials, Nonlinear Anal. 142 (2016)
48--68.}

\bibitem{zhang3} {Y. Zhang, X. Tang, V. R\u adulescu,  Small perturbations for
nonlinear Schr\"{o}dinger equations with magnetic potential,  Milan
J. Math. 88 (2020)  479--506.}

\end{thebibliography}
\end{document}